\pdfminorversion=4
\documentclass[10pt,a4paper]{article}         

\usepackage{color}
\usepackage{a4,amsmath,amssymb,amscd,latexsym,amsthm} 
\usepackage{bbm} 
\usepackage{epsfig,color} 
\usepackage{amsfonts}
\usepackage{mathrsfs}
\usepackage{overpic}
\usepackage{subcaption}
\usepackage{paralist}
\usepackage{graphicx}
\usepackage{trfsigns}
\usepackage{url}
\usepackage{stmaryrd}

\usepackage{tikz}
\usetikzlibrary{calc,trees,positioning,arrows,chains,shapes.geometric,%
    decorations.pathreplacing,decorations.pathmorphing,shapes,%
    matrix,shapes.symbols,backgrounds,shadows}
  
\usepackage{tikz-cd} 

\newtheoremstyle{mystyle}
{3pt}
{3pt}
{\itshape}
{}
{\bold}
{.}
{.5em}
{}

\newtheorem{definition}{Definition}[section]
\newtheorem{theorem}[definition]{Theorem}
\newtheorem{remark}[definition]{Remark}
\newtheorem{defthe}[definition]{Theorem and Definition}

\numberwithin{equation}{section}

\usepackage[]{algorithm2e}
\usepackage{algorithmicx}

\newcommand{\Gi}{\Gamma}
\newcommand{\Go}{\Gamma_\text{out}}

\newcommand{\Gb}{\mathrm{\Gamma_{bottom}}}
\newcommand{\Gl}{\mathrm{\Gamma_{left}}}
\newcommand{\Gr}{\mathrm{\Gamma_{right}}}
\newcommand{\Gt}{\mathrm{\Gamma_{top}}}

\def\nd#1{\frac{\partial #1}{\partial n}} 
\def\td#1{\frac{\partial #1}{\partial t}} 

\def\intdxdt#1{\int_{0}^{T} \int_{X(\Gi)} #1 \hspace{.5mm}dx\hspace{.5mm}dt}

\newcommand{\kathrin}[1]{{\color{black}#1}}

\newcommand{\rrevkathrin}[1]{{\color{black}#1}}
\newenvironment{rev} {\color{black}} {\color{black}}
\newenvironment{rrev} {\color{black}} {\color{black}}
\newenvironment{rrrev} {\color{black}} {\color{black}}

\title{\bf Suitable Spaces for Shape Optimization}

\author{Kathrin Welker\thanks{Helmut-Schmidt-University / University of the Federal Armed Forces Hamburg, Faculty of Mechanical Engineering, Holstenhofweg 85, 22043 Hamburg, Germany (\tt welker@hsu-hh.de)}} 
\date{}

\begin{document}

\maketitle

\begin{abstract}
\noindent
The differential-geometric structure of the manifold of smooth shapes is applied to the theory of shape optimization problems. In particular, a Riemannian shape gradient with respect to the first Sobolev metric and the Steklov-Poincar\'{e} metric are defined. Moreover, the covariant derivative associated with the first Sobolev metric is deduced in this paper. The explicit expression of the covariant derivative leads to a definition of the Riemannian shape Hessian with respect to the first Sobolev metric.
In this paper, we give a brief overview of various optimization techniques based on the gradients and the Hessian. 
Since the space of smooth shapes limits the application of the optimization techniques, this paper extends the definition of smooth shapes to $H^{1/2}$-shapes, which arise naturally in shape optimization problems. 
We define a diffeological structure on the new space of $H^{1/2}$-shapes. This can be seen as a first step towards the formulation of optimization techniques on diffeological spaces.
\end{abstract}

\paragraph*{Key words. }
Shape optimization, shape space, diffeological space, manifold

\paragraph*{AMS classifications.}
57N25, 49Q10, 65K10, 35Q93

\section{Introduction}

Shape optimization is of great importance in a wide range of applications.
A lot of real world problems can be reformulated as shape optimization problems which are constrained by partial differential equations (PDE). Aerodynamic shape optimization \cite{AIAA-2013}, acoustic shape optimization \cite{Berggren2016largescale}, optimization of interfaces in transmission problems \cite{Langer-2015,Paganini}, image restoration and segmentation \cite{HR-2003}, electrochemical machining \cite{HL-2011} and inverse modelling of skin structures \cite{Skin-2015} can be mentioned as examples. 
The subject of shape optimization is covered by several fundamental monographs, see, for instance, \cite{Delfour-Zolesio-2001,SokoZol}.

Questions like  \emph{``How can shapes be defined?''} or  \emph{``How does the set of all shapes look like?''} have been extensively studied in recent decades.
Already in 1984, David G.~Kendall has introduced the notion of a shape space in \cite{Kendall}. Often, a shape space is just modeled as a linear (vector) space, which in the simplest case is made up of vectors of landmark positions (cf.~\cite{CootesTaylorCooperGraham,Kendall}).
However, there is a large number of different shape concepts, e.g., plane curves \cite{MichorMumford,MioSrivastavaJoshi}, surfaces in higher dimensions \cite{BauerHarmsMichor,KMP,MichorMumford2}, boundary contours of objects \cite{FuchsJuettlerScherzerYang,LingJacobs,RumpfWirth2}, multiphase objects \cite{WirthRumpf}, characteristic functions of measurable sets \cite{Zolesio} and morphologies of images \cite{DroskeRumpf}.
In a lot of processes in engineering, medical imaging and science, there is a great interest to equip the space of all shapes with a significant metric to distinguish between different shape geometries.
In the simplest shape space case (landmark vectors), the distances between shapes can be measured by the Euclidean distance, but in general, the study of shapes and their similarities is a central problem.
In order to tackle natural questions like \emph{``How different are shapes?''}, \emph{``Can we determine the measure of their difference?''} or \emph{``Can we infer some information?''} mathematically, we have to put a metric on the shape space. There are various types of metrics on shape spaces, e.g.,
inner metrics \cite{BauerHarmsMichor,MichorMumford} like the Sobolev metrics, outer metrics \cite{BegLDDMM,Kendall,MichorMumford}, metamorphosis metrics \cite{Holm,TrYo}, the Wasserstein or Monge-Kantorovic metric on the shape space of probability measures \cite{AmbrosioGigliSava,BenamouBrenier}, the Weil-Petersson metric \cite{Kushnarev}, current metrics \cite{DurrlemanPennec} and metrics based on elastic deformations \cite{FuchsJuettlerScherzerYang,RumpfWirth2}.
However, it is a challenging task to model both, the shape space and the associated metric. There does not exist a common shape space or shape metric suitable for all applications. Different approaches lead to diverse models. The suitability of an approach depends on the requirements in a given situation.

In contrast to a finite dimensional optimization problem, which can be obtained, e.g., by representing shapes as splines, the connection of shape calculus with infinite dimensional spaces \cite{Delfour-Zolesio-2001,ItoKunisch,SokoZol} leads to a more flexible approach. 
In recent work, it has been shown that PDE constrained shape optimization problems can be embedded in the framework of optimization on shape spaces. E.g., in \cite{schulz2014structure}, shape optimization is considered as optimization on a Riemannian shape manifold, the manifold of smooth shapes.
Moreover, an inner product, which is called Steklov-Poincar\'{e} metric, for the application of finite element (FE)  methods is proposed in \cite{schulz2015Steklov}.

\kathrin{First, we concentrate on the particular manifold of smooth shapes and consider the first Sobolev and the Steklov-Poincar\'{e} metric in this paper. 
	\begin{rrev}
		The definition of the Riemannian shape gradient with respect to these two metrics results in\end{rrev}
	the formulation of gradient based optimization algorithms. One aim of this paper is to give an overview of the optimization techniques in the space of smooth shapes
	\begin{rrev}
		together with the first Sobolev and Steklov-Poincar\'{e} metric.
	\end{rrev}
	This paper extends the gradient based results in \cite{gsi2017}, where the theory of PDE constrained shape optimization problems is connected with the differential-geometric structure of the space of smooth shapes. 
	To be more precisely, this paper \begin{rrev}aims at\end{rrev} the definition of a Riemannian shape Hessian with respect to the first Sobolev metric.
	In order to formulate such a definition, the covariant derivative needs to be specified. This paper formulates a theorem about the covariant derivative associated with the first Sobolev metric. This opens the door for formulating higher order methods in the space of smooth shapes.}

The manifold of smooth shapes contains shapes with infinitely differentiable boundaries, which limits the practical applicability.
For example, in the setting of PDE constrained shape optimization, one has to deal with polygonal shape representations from a computational point of view. This is because FE methods are usually used to discretize the models. 
In \cite{schulz2015Steklov}, not only an inner product, the Steklov-Poincar\'{e} metric, is given but also a suitable shape space for the application of FE methods is proposed. The combination of this particular shape space and its associated inner product is an essential step towards applying efficient FE solvers as outlined in \cite{SiebenbornWelker_skin}. 
However, so far, this shape space and its properties are not investigated.
From a theoretical point of view, it is necessary to clarify its structure. If we do not know the structure, there is no chance to get control over the space.
\kathrin{Thus, this paper aims at a generalization of smooth shapes to shapes which arise naturally in shape optimization problems. We define the space of so-called \emph{$H^{1/2}$-shapes}. Moreover, we clarify its structure as a diffeological one and, thus, go towards the formulation of optimization techniques on diffeological spaces.
	Since a diffeological space is one of the generalizations of manifolds, this paper formulates a theorem which  clarifies the difference between manifolds and diffeological spaces.}

This paper is organized as follows. 
In Section \ref{section_overview}, besides a short overview of basic concepts in shape optimization \kathrin{(Subsection~\ref{subsection_basicconcepts})}, the connection of shape calculus with the differential-geometric structure of shape spaces is stated \kathrin{(Subsection~\ref{subsection_shapecalculus})}. \kathrin{In particular, the Riemannian shape gradients with respect to the first Sobolev and Steklov-Poincar\'{e} metric are defined and the Riemannian shape Hessian with respect to the first Sobolev metric is given. One of the main theorems of this paper is Theorem~\ref{theorem_covdervH1}, which specifies the covariant derivative associated with the first Sobolev metric \begin{rrev}necessary for the definition of the Riemannian shape Hessian with respect to the first Sobolev metric.\end{rrev} 
	\begin{rrev}
		Thanks to the definition of the Riemannian shape Hessian we are able to formulate the Newton method in the space of smooth shapes together with the first Sobolev metric.
	\end{rrev}
	\begin{rrev}Additionally,\end{rrev} we give a brief overview of \begin{rrev}first order\end{rrev} optimization techniques based on gradients \begin{rrev}with respect to the first Sobolev metric as well as the Steklov-Poincar\'{e} metric\end{rrev}. In particular, \begin{rrev}Subsection~\ref{subsection_shapecalculus} ends with a comparison of the gradient based algorithms for a specific example.\end{rrev}}
Section \ref{section_shapespace} is concerned with the space of $H^{1/2}$-shapes.
\kathrin{First, we give a brief introduction in diffeological spaces and explain the difference between these spaces and manifolds (Subsection~\ref{subsection_definitions_diffspace_B12}). 
	The first main theorem of Section \ref{section_shapespace} is Theorem~\ref{theorem_diffman}, which specifies the difference between diffeological spaces and manifolds.
	In Subsection~\ref{subsection_shapespaceB12}, the space of $H^{1/2}$-shapes is defined.
	Here, Theorem~\ref{Theorem:DiffStructure}, which is the third and last of the main theorems in this paper, endows the space of $H^{1/2}$-shapes with its diffeological structure.}


\section{Optimization in shape spaces}\label{section_overview} 

First, we set up notation and terminology of basic shape optimization concepts (Subsection~\ref{subsection_basicconcepts}). 
Afterwards, shape calculus is combined with geometric concepts of shape spaces (Subsection~\ref{subsection_shapecalculus}). \kathrin{In \cite{gsi2017}, the theory of shape optimization problems constrained by partial differential equations is already connected with the differential-geometric structure of the space of smooth shapes. Moreover, gradient-based methods are outlined. However, Subsection~\ref{subsection_shapecalculus} extends these results to a Riemannian shape Hessian, for which the covariant derivative needs to be specified. This opens the door for formulating higher order methods in space of smooth shapes. \begin{rrev}
		In particular, we formulate a Newton method on the space of smooth shapes based on the definition of the Riemannian shape Hessian.
	\end{rrev}
}

\subsection{Basic concepts in shape optimization}
\label{subsection_basicconcepts}

\begin{rrev}
	This section sets up notation and terminology of basic shape optimization concepts used in this paper.
	For a detailed introduction into shape calculus, we refer to the monographs \cite{Delfour-Zolesio-2001,SokoZol}. 
\end{rrev}

One of the main focuses of shape optimization is to investigate shape functionals and solve shape optimization problems.
First, we give the definition of a shape functional.

\begin{definition}[Shape functional]
	\label{def_shapefunctional}
	Let $D$ denote a non-empty subset of $\mathbb{R}^d$, where $d\in\mathbb{N}$. Moreover, $\mathcal{A}\subset \{\Omega\colon \Omega \subset D\}$ denotes a set of subsets. A function
	$$J\colon \mathcal{A}\to \mathbb{R}\text{, } \Omega\mapsto J(\Omega)$$
	is called a shape functional.
\end{definition}

Let $J\colon \mathcal{A}\to \mathbb{R}$ be a shape functional, where $\mathcal{A}$ is a set of subsets $\Omega$ as in Definition \ref{def_shapefunctional}.
An \emph{unconstrained shape optimization problem} is given by
\begin{equation}
\label{minproblem}
\min_{\Omega\in \mathcal{A}} J(\Omega).
\end{equation}
Often, shape optimization problems are constrained by equations, e.g., equations involving an unknown function of two or more variables and at least one partial derivative of this function. 
\kathrin{In this case, the objective functional $J$ has two arguments, the shape $\Omega$ as well as the so-called \emph{state variable} $y$, where the state variable is the solution of the underlying constraint. A \emph{constrained shape optimization problem} reads as
	\begin{align}
	\min_{(\Omega,y)\in \mathcal{A}\times \mathcal{X}(\Omega)} &J(\Omega,y)\label{minproblem2}\\
	\text{s.t. } \hspace{.3cm}\quad &y=y(\Omega) \text{ solves } \begin{rrev}
	\mathcal{F}
	\end{rrev}(\Omega,y(\Omega))=0,
	\end{align}
	where $\mathcal{X}(\Omega)$ is usually a function space and the constraint $\begin{rrev}
	\mathcal{F}
	\end{rrev}(\Omega,y(\Omega))=0$ is given for example by a PDE or a system of PDEs.
}
When $J$ in (\ref{minproblem2}) depends on a solution of a PDE, we call the shape optimization problem \emph{PDE constrained}.

Let $D$ be as in \rrevkathrin{Definition \ref{def_shapefunctional}}. Moreover, let $\{F_t\}_{t\in[0,T]}$ be a family of mappings $F_t\colon \overline{D}\to\mathbb{R}^d$ such that $F_0=\kathrin{\text{id}}$, where $\overline{D}$ denotes the closure of $D$ and $T>0$.
This family transforms the domain $\Omega$ into new \emph{perturbed domains} 
\begin{equation*}
\Omega_t := F_t(\Omega)=\{F_t(x)\colon x\in \Omega\}\text{ with }\Omega_0=\Omega
\end{equation*}
and the boundary $\Gamma$ of $\Omega$ into new \emph{perturbed boundaries}
\begin{equation*}
\Gamma_t := F_t(\Gamma)=\{F_t(x)\colon x\in \Gamma\}\text{ with }\Gamma_0=\Gamma.
\end{equation*}
Such a transformation can be described by the \emph{velocity method} or by the \emph{perturbation of identity}.
We concentrate on the perturbation of identity, which is defined by $F_t(x):= x+tV(x)$, where $V$ denotes a sufficiently smooth vector field.

To solve shape optimization problems, we need their shape derivatives. 

\begin{definition}[Shape derivative]
	\label{def_shapeder}
	\kathrin{Let $D\subset \mathbb{R}^d$ be open, $\Omega\subset D$ and $k\in\mathbb{N}\cup \{\infty\}$. Moreover, let $\mathcal{C}^k_0(D,\mathbb{R}^d)$ denote the set of $\mathcal{C}^k(D,\mathbb{R}^d)$-functions which vanish on $\partial\Omega$.}
	The Eulerian derivative of a shape functional $J$ at $\Omega$ in direction $V\in\mathcal{C}^k_0(D,\mathbb{R}^d)$ is defined by
	\begin{equation}
	\label{eulerian}
	DJ(\Omega)[V]:= \lim\limits_{t\to 0^+}\frac{J(\Omega_t)-J(\Omega)}{t}. 
	\end{equation}
	If for all directions $V\in\mathcal{C}^k_0(D,\mathbb{R}^d)$ the Eulerian derivative (\ref{eulerian}) exists and the mapping 
	\begin{equation*}
	G(\Omega)\colon \mathcal{C}^k_0(D,\mathbb{R}^d)\to \mathbb{R}, \ V\mapsto DJ(\Omega)[V]
	\end{equation*}
	is linear and continuous, the expression $DJ(\Omega)[V]$ is called the shape derivative of $J$ at $\Omega$ in direction $V\in\mathcal{C}^k_0(D,\mathbb{R}^d)$. In this case, $J$ is called shape differentiable of class $\mathcal{C}^k$ at $\Omega$.
\end{definition}

\begin{rrev}
	\begin{remark}
		There are \begin{rrev}many\end{rrev} options to prove shape differentiability of shape functionals which depend on a solution of a PDE and to derive the shape derivative of a shape optimization problem. The min-max approach \cite{Delfour-Zolesio-2001}, the chain rule approach \cite{SokoZol}, the Lagrange method of C\'{e}a \cite{Cea-RAIRO} and the rearrangement method \cite{Ito-Kunisch-Peichl} have to be mentioned in this context.
		A nice overview about these approaches is given in \cite{Sturm}. 
	\end{remark}
\end{rrev}

The Hadamard Structure Theorem (cf.~\cite[Theorem 2.27]{SokoZol}) states that \kathrin{under certain assumptions the shape derivative is a distribution acting on the normal part of the perturbation field on the boundary}. 

\begin{rev}
	\begin{theorem}[Hadamard Structure Theorem]
		\label{theorem_HadamardStructure}
		Let $D$ and $\Omega$ be as in Definition~\ref{def_shapeder}. Moreover, let the shape functional $J$ be shape differentiable of class $\mathcal{C}^k$ at every domain $\Omega\subset D$ with $\mathcal{C}^{k-1}$-boundary $\Gamma=\partial\Omega$.
		Then there exists a scalar distribution $r\in \mathcal{C}^k_0(\Gamma)'$ such that 
		$G(\Omega)\in\mathcal{C}^k_0(\Omega,\mathbb{R}^d)'$ of $J$ at $\Omega$ is given by
		\begin{equation}
		\label{hadamard}
		G(\Omega)=\gamma_\Gamma'(r\cdot n).
		\end{equation}
		Here $ \mathcal{C}^k_0(\Gamma)'$ and $\mathcal{C}^k_0(\Omega,\mathbb{R}^d)'$ denote the dual spaces of $ \mathcal{C}^k_0(\Gamma)$ and $\mathcal{C}^k_0(\Omega,\mathbb{R}^d)$. Moreover, $$\gamma_\Gamma\colon \mathcal{C}^k_0(\overline{D},\mathbb{R}^d)\to \mathcal{C}^k_0(\Gamma,\mathbb{R}^d), \ v\to v\,\rule[-2mm]{.1mm}{4mm}_{\hspace{.6mm}\Gamma} $$ denotes the trace operator and $\gamma_\Gamma'$ its adjoint operator.
	\end{theorem} 
	\noindent
	Note that the Hadamard Structure Theorem \ref{theorem_HadamardStructure} actually states the existence of a scalar distribution $r=r(\Omega)$ on the boundary $\Gamma$ of a domain $\Omega$. However, in this paper, we always assume that $r$ is an integrable function. In general, if $r\in L^1(\Gamma)$, then $r$ is obtained in the form of the trace on $\Gamma$ of an element $G\in W^{1,1}(\Omega)$. This means that it follows from (\ref{hadamard}) that the shape derivative can be expressed more conveniently as
	\vspace{.2cm}
	\begin{equation*}
	DJ(\Omega)[V] =\int_\Gamma r\left<V, n\right> ds.
	\end{equation*}
\end{rev}

\begin{rev}
	If the objective functional is given by an integral over the whole domain, the shape derivative can be expressed as an integral over the domain, the so-called \emph{volume} or \emph{weak formulation}, and an integral over the boundary, the so-called \emph{surface} or \emph{strong formulation}:\end{rev}
\begin{align}
\label{dom_form}
DJ_\Omega[V]&:= \int_\Omega RV(x)\, dx  &\text{(volume / weak formulation)}\\[5pt]
\label{bound_form}
DJ_\Gamma[V]&:=\int_\Gamma r(s)\left<V(s), n(s)\right> ds  &\text{(surface / strong formulation)}
\end{align}
Here $r\in L^1(\Gamma)$ and $R$ is a differential operator acting linearly on the vector field $V$ with
$DJ_\Omega[V]=DJ(\Omega)[V]=DJ_\Gamma[V]$.
Recent \rrevkathrin{advances} in PDE constrained optimization on shape manifolds are based on the surface formulation, also called \emph{Hadamard-form}, as well as intrinsic shape metrics. Major effort in shape calculus has been devoted towards such surface expressions (cf.~\cite{Delfour-Zolesio-2001,SokoZol}), which are often very tedious to derive.
\kathrin{When one derives a shape derivative of an objective functional which is given by an integral over the domain, one first get the volume formulation. This volume form can be converted into its surface form by applying the integration by parts formula. In order to apply this formula, one needs a higher regularity of the state and adjoint of the underlying PDE. }
Recently, it has been shown that the \kathrin{weak} formulation has numerical advantages, see, for instance, \cite{Berggren,Langer-2015,HipPag_2015,Paganini}. In \cite{LaurainSturm2013}, also practical advantages of volume shape formulations have been demonstrated. 
However, volume integral forms of shape derivatives require an outer metric on the domain surrounding the shape boundary. 
\kathrin{In contrast to inner metrics, which can be seen as describing a deformable material that the shape itself is made of, the differential operator governing outer metrics is defined even outside of the shape (cf., e.g.,  \cite{BegLDDMM,Bookstein,Kendall,MichorMumford}). }
In \cite{schulz2015Steklov}, both points of view are harmonized by deriving a metric from an outer metric. 
Based on this metric, efficient shape optimization algorithms, which also reduce the analytical effort so far involved in the derivation of shape derivatives, are proposed in \cite{schulz2015Steklov,SiebenbornWelker_skin,Welker}. The next subsection \begin{rrev}
	concentrates on the question\end{rrev} how shape calculus and in particular shape derivatives can be combined with geometric concepts of shape spaces. This combination results in efficient optimization techniques in shape spaces.

\subsection{Shape calculus combined with geometric concepts of shape spaces}
\label{subsection_shapecalculus}

As pointed out in \cite{Schulz}, shape optimization can be viewed as optimization on Riemannian shape manifolds and the resulting optimization methods can be constructed and analyzed within this framework. This combines algorithmic ideas from \cite{Absil} with the Riemannian geometrical point of view established in \cite{BauerHarmsMichor}.
In this subsection, we analyze the connection of Riemannian geometry on the space of smooth shapes to shape optimization 
\begin{rrev}
	and extend the results in \cite{gsi2017}, which is also concerned with this connection, to a Riemannian shape Hessian and to a second order optimization method. In particular, we specify the covariant derivative associated with the first Sobolev metric on the space of smooth shapes (cf.~Theorem~\ref{theorem_covdervH1} in Subsection~\ref{Subsubsection:Be}), which results in the definition of the Riemannian shape Hessian with respect to this metric (cf.~Definition~\ref{definition_Hessian} in Subsection~\ref{Subsubesction:OptimizationBe}). The formulation of the covariant derivative and the shape Hessian opens the door for formulating higher order methods in space of smooth shapes (cf.~Algorithm~\ref{Algo:Newton} in Subsection~\ref{Subsubesction:OptimizationBe}).
\end{rrev}

\subsubsection{The space of smooth shapes}
\label{Subsubsection:Be}

\begin{rrev}
	We first introduce the space of smooth shapes and summarize some of its properties which are relevant for this paper from the literature \cite{BauerBruverisMICCAI,BauerHarmsMichor,BauerHarmsMichor_SobolevII,KrieglMichor,MichorMumford1,MichorMumford}. 
\end{rrev}
First, we concentrate on one-dimensional shapes, which
are defined as the images of simple closed smooth curves in the plane of the unit circle. Such simple closed smooth curves can be represented by embeddings from the circle $S^1$ into the plane $\mathbb{R}^2$, see, for instance, \cite{Kuehnel}. Therefore, the set of all embeddings from $S^1$ into $\mathbb{R}^2$, denoted by $\mathrm{Emb}(S^1,\mathbb{R}^2)$, represents all simple closed smooth curves in $\mathbb{R}^2$.
However, note that we are only interested in the shape itself and that images are not changed by re-parametrizations. Thus, all simple closed smooth curves which differ only by re-parametrizations can be considered equal to each other because they lead to the same image. Let $\mathrm{Diff}(S^1)$ denote the set of all diffeomorphisms from $S^1$ into itself. This set is a regular Lie group (cf.~\cite[Chapter~VIII, 38.4]{KrieglMichor}) and consists of all the smooth re-parametrizations mentioned above.
In \cite{MichorMumford1}, the \emph{set of all one-dimensional shapes} is characterized by
\begin{equation}
\label{B_e_2dim}
B_e(S^1,\mathbb{R}^2):= \mathrm{Emb}(S^1,\mathbb{R}^2)/\mathrm{Diff}(S^1),
\end{equation}  
i.e., the obit space of $\mathrm{Emb}(S^1,\mathbb{R}^2)$ under the action by composition from the right by the Lie group $\mathrm{Diff}(S^1)$.
A particular point on $B_e(S^1,\mathbb{R}^2)$ is represented by a curve 
$
c\colon S^1\to \mathbb{R}^2 , \ \theta\mapsto c(\theta)
$
and illustrated in the left picture of Figure \ref{figure_Be}.
The tangent space is isomorphic to the set of all smooth normal vector fields along $c$, i.e.,
\begin{equation}
\label{isomorphismTcBe}
T_cB_e(S^1,\mathbb{R}^2)\cong\left\{h\colon h=\alpha n,\, \alpha\in \mathcal{C}^\infty(S^1)\right\},
\end{equation}
where $n$ denotes the exterior unit normal field to the shape boundary $c$ such that $n (\theta)\perp c_\theta(\theta)$ for all $\theta\in S^1$, where $c_\theta=\frac{\partial c}{\partial\theta}$ denotes the circumferential derivative as in \cite{MichorMumford1}. 
Since we are dealing with parametrized curves, we have to work with the arc length and its derivative. Therefore, we use the following notation:
\begin{align}
ds & = \lvert c_\theta \rvert d\theta & \text{(arc length \kathrin{with respect to $c$})}\label{arclength} \\
D_s & = \frac{\partial_\theta}{\lvert c_\theta \rvert} & \text{(arc length derivative \kathrin{with respect to $c$})}\label{Def:arclength}
\end{align}

\kathrin{
	\begin{remark}
		Some properties of the operator $D_s$ can be found in, e.g., \cite{MichorMumford}. In  \cite{BauerHarmsMichor}, this operator is considered for higher dimensions and its connection with the Bochner-Laplacian is given. 
	\end{remark}
}

In \cite{KrieglMichor}, it is proven that the shape space $B_e(S^1,\mathbb{R}^2)$ is a smooth manifold. \emph{Is it even perhaps a Riemannian shape manifold?} This question was investigated by Peter W.~Michor and David Mumford. They show in \cite{MichorMumford1} that the standard $L^2$-metric on the tangent space is too weak because it induces geodesic distance equals zero. This phenomenon is called the \emph{vanishing geodesic distance phenomenon}. The authors employ a curvature weighted $L^2$-metric as a remedy and prove that the vanishing phenomenon does not occur for this metric. Several Riemannian metrics on this shape space are examined in further publications, e.g., \cite{BauerHarmsMichor,MichorMumford2,MichorMumford}. All these metrics arise from the $L^2$-metric by putting weights, derivatives or both in it. In this manner, we get three groups of metrics: the \emph{almost local metrics} which arise by putting weights in the $L^2$-metric (cf.~\cite{BauerHarmsMichor_SobolevII,MichorMumford}), the \emph{Sobolev metrics} which arise by putting derivatives in the $L^2$-metric (cf.~\cite{BauerHarmsMichor,MichorMumford}) and the \emph{weighted Sobolev metrics} which arise by putting both, weights and derivatives, in the $L^2$-metric (cf.~\cite{BauerHarmsMichor_SobolevII}).
It can be shown that all these metrics do not induce the phenomenon of vanishing geodesic distance under special assumptions. To list all these goes beyond the scope of this paper, but they can be found in the above-mentioned publications. All Riemannian metrics mentioned above are \emph{inner metrics}. \kathrin{As already mentioned above, this means that  
	the deformation is prescribed on the shape itself and the ambient space\footnote{\begin{rrev}The \emph{ambient space} of a mathematical object is the space surrounding that mathematical object along with the object itself.
	\end{rrev}} stays fixed.}

In the following, we clarify  \kathrin{briefly}  how the above-mentioned inner Riemannian metrics can be defined on the shape space $B_e(S^1,\mathbb{R}^2)$. 
\kathrin{For details we refer to \cite{MichorMumford}. Moreover, we refer to \cite{BauerBruverisMICCAI} for a comparison of  an inner metric on $B_e(S^1,\mathbb{R}^2)$ with the diffeomorphic matching framework which works with outer metrics.}

\kathrin{First, we define a Riemannian metric on the space $\mathrm{Emb}(S^1,\mathbb{R}^2)$, which is} 
a family $g=\left(g_c(h,k)\right)_{c\in \mathrm{Emb}(S^1,\mathbb{R}^2)}$ of inner products $g_c(h,k)$, where $h$ and $k$ denote vector fields along $c\in\mathrm{Emb}(S^1,\mathbb{R}^2)$. The most simple inner product on the tangent bundle to $\text{Emb}(S^1,\mathbb{R}^2)$ is the standard $L^2$-inner product $g_c(h,k) := \int_{S^1}\left< h,k\right> ds$.
Note that 
\begin{equation}
\label{TangentSpace}
T_c\text{Emb}(S^1,\mathbb{R}^2)\cong \mathcal{C}^\infty(S^1,\mathbb{R}^2) \qquad \forall\, c\in \text{Emb}(S^1,\mathbb{R}^2)
\end{equation}
and that a tangent vector $h\in T_c\text{Emb}(S^1,\mathbb{R}^2)$ has an orthonormal decomposition into smooth tangential components $h^\top$ and normal components $h^\perp$ (cf.~\cite[Section~3, 3.2]{MichorMumford1}).
In particular, $h^\perp$ is an element of the bundle of tangent vectors which are normal to the $\text{Diff}(S^1)$-orbits denoted by $\mathcal{N}_c$. This normal bundle is well defined and is a smooth vector subbundle of the tangent bundle.
In \cite{MichorMumford1}, it is outlined how the restriction of the metric $g_c$ to the subbundle $\mathcal{N}_c$ gives the quotient metric. 
The quotient metric induced by the $L^2$-metric is given by
\begin{equation}
\begin{split}
g^0\colon T_cB_e(S^1,\mathbb{R}^2)\times T_cB_e(S^1,\mathbb{R}^2) & \to \mathbb{R}\text{, } \\
(h,k) & \mapsto \int_{S^1}\left<\alpha,\beta \right>  ds, 
\end{split}
\end{equation}
where $h=\alpha n$ and $k=\beta n$ denote two elements of the tangent space $T_cB_e(S^1,\mathbb{R}^2)$ given in (\ref{isomorphismTcBe}). Unfortunately, in \cite{MichorMumford1}, it is shown that this $L^2$-metric induces vanishing geodesic distance, as already mentioned above.

For the following discussion, among all the above-mentioned Riemannian metrics, we pick the first Sobolev metric \begin{rrev}which does not induce the phenomenon of vanishing geodesic distance\end{rrev} \kathrin{(cf.~\cite{MichorMumford})}.  \begin{rrev}On $B_e(S^1,\mathbb{R}^2)$ it is defined as follows:\end{rrev}

\begin{definition}[First Sobolev metric on $B_e(S^1,\mathbb{R}^2)$]
	The first Sobolev metric on $B_e(S^1,\mathbb{R}^2)$ is given by
	\begin{equation}
	\label{Sobolev-metric_g1}
	\begin{split}
	g^1\colon T_cB_e(S^1,\mathbb{R}^2)\times T_cB_e(S^1,\mathbb{R}^2) & \to \mathbb{R}\text{, } \\
	(h,k) & \mapsto \int_{S^1}\left< \left(I - A \kathrin{D^2_s}\right)h,k\right> ds, 
	\end{split}
	\end{equation}
	where 
	$A>0$ and \kathrin{$D_s$ denotes the arc length derivative with respect to $c$ defined in (\ref{Def:arclength})}. 
\end{definition}

An essential operation in Riemannian geometry is the covariant derivative. In differential geometry, it is often written in terms of the Christoffel symbols. In \cite{BauerHarmsMichor}, Christoffel symbols associated with the Sobolev metrics are provided. However, in order to provide a relation with shape calculus,
another representation of the covariant derivative in terms of the Sobolev metric $g^1$ is needed. 
\kathrin{Now, we get to the first main theorem of this paper.}
The Riemannian connection provided by \kathrin{this} theorem makes it possible to specify the Riemannian shape Hessian.

\begin{theorem}
	\label{theorem_covdervH1}
	Let $A>0$ and let $h,m\in T_c\text{\emph{Emb}}(S^1,\mathbb{R}^2)$ denote vector fields along $c\in\text{\emph{Emb}}(S^1,\mathbb{R}^2)$. \kathrin{The arc length derivative with respect to $c$ is denoted by $D_s$ as in (\ref{Def:arclength}).} Moreover, $L_1:= I-AD_s^2$ is a differential operator on $\mathcal{C}^\infty(S^1,\mathbb{R}^2)$ and $L_1^{-1}$ denotes its inverse operator. The covariant derivative associated with the Sobolev metric $g^1$ can be expressed as
	\begin{equation}
	\label{cov_der}
	\nabla_m h=L_1^{-1}(K_1(h))\text{ with }K_1 := \frac{1}{2}\left<D_s m,v\right>\left(I+AD_s^2\right),
	\end{equation}
	where $v=\frac{c_\theta}{|c_\theta|}$ denotes the unit tangent vector.
\end{theorem}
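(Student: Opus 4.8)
The plan is to derive the covariant derivative formula by the standard route: first obtain the Levi-Civita connection for the weak Riemannian metric $g^1$ via Koszul's formula, and then massage the resulting expression into the stated operator form using integration by parts on $S^1$. Throughout I would exploit the fact that $g^1_c(h,k) = \int_{S^1} \langle L_1 h, k\rangle\, ds$ with $L_1 = I - A D_s^2$ a positive, (formally) self-adjoint, invertible differential operator on $\mathcal{C}^\infty(S^1,\mathbb{R}^2)$; self-adjointness of $L_1$ with respect to the $L^2$-pairing $\int_{S^1}\langle\cdot,\cdot\rangle\, ds$ is the technical fact I would record first, taking care because the arc-length element $ds = |c_\theta|\, d\theta$ itself depends on the footpoint $c$.

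Next I would write down Koszul's formula for vector fields $h,m$ (extended locally in the obvious way on the space of embeddings) together with the directional derivatives of the metric. The key input is the variation of $g^1_c$ in a direction $m$: one must differentiate both the operator $L_1$ (through its dependence on $c$ via $D_s$ and the arc length) and the integrating measure $ds$. Concretely, $D_s = \partial_\theta/|c_\theta|$ and $|c_\theta|$ vary along $c$, and the derivative of $|c_\theta|$ in direction $m$ produces the factor $\langle D_s m, v\rangle\, |c_\theta|$, where $v = c_\theta/|c_\theta|$ is the unit tangent. This is precisely where the term $\langle D_s m, v\rangle$ in $K_1$ originates. Collecting the three contributions (the variation of $ds$, and the two occurrences of the variation of $D_s^2$ inside $L_1$) and using the self-adjointness to move $L_1$ off the slot being differentiated, I expect the Koszul combination to collapse to $\int_{S^1}\langle \tfrac12 \langle D_s m, v\rangle (I + A D_s^2) h, k\rangle\, ds$ for all test fields $k$. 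Since $g^1_c(\nabla_m h, k) = \int_{S^1}\langle L_1 (\nabla_m h), k\rangle\, ds$ and $k$ is arbitrary, nondegeneracy of the $L^2$-pairing on $\mathcal{C}^\infty(S^1,\mathbb{R}^2)$ forces $L_1(\nabla_m h) = K_1(h)$, hence $\nabla_m h = L_1^{-1}(K_1(h))$, which is the claim.

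The main obstacle is the bookkeeping in the metric variation: one has to differentiate $D_s^{2}$ acting on a vector field, which by the product rule for $D_s$ yields several terms involving $D_s\langle D_s m, v\rangle$, curvature-type quantities, and cross terms, and then integrate by parts (freely, since $S^1$ has no boundary) to reassemble everything into the compact symmetric form $\tfrac12\langle D_s m, v\rangle(I + A D_s^2)$. I would organize this by first treating the $L^2$-metric ($A=0$) case as a warm-up — recovering the known connection there — and then handling the $A D_s^2$ correction, checking at the end that the resulting $\nabla$ is torsion-free (symmetric in the appropriate sense) and metric, so that by uniqueness of the Levi-Civita connection the formula (\ref{cov_der}) is indeed the Riemannian connection of $g^1$. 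A secondary point requiring care is that $g^1$ is only a weak metric, so I would phrase the argument so that it only uses that $\nabla_m h$ is characterized by the Koszul identity against all smooth $k$, which is legitimate here because $L_1$ is invertible on $\mathcal{C}^\infty(S^1,\mathbb{R}^2)$.
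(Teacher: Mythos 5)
Your preparatory computations coincide with the paper's: both rest on $d(|c_\theta|)[m]=\langle D_s m,v\rangle\,|c_\theta|$, the induced variations of $ds$ and of the operator $L_1$, and the resulting identity
\begin{equation*}
d\bigl(g^1_c(h,k)\bigr)[m]=\int_{S^1}\langle D_s m,v\rangle\bigl(\langle h,k\rangle+A\langle D_s^2h,k\rangle\bigr)\,ds .
\end{equation*}
The gap is in the step that turns this into $\nabla_m h$. You invoke the full Koszul formula for constant vector fields, $2g(\nabla_m h,k)=d(g(h,k))[m]+d(g(m,k))[h]-d(g(m,h))[k]$, and assert that this three-term combination collapses to $\int_{S^1}\bigl\langle\tfrac12\langle D_s m,v\rangle(I+AD_s^2)h,k\bigr\rangle\,ds$. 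It does not: the second and third terms contribute $\int_{S^1}\bigl[\langle D_s h,v\rangle(\langle m,k\rangle+A\langle D_s^2m,k\rangle)-\langle D_s k,v\rangle(\langle m,h\rangle+A\langle D_s^2m,h\rangle)\bigr]\,ds$, and no integration by parts annihilates them. Already for $A=0$ the Koszul route yields $\nabla_m h=\tfrac12\bigl(\langle D_s m,v\rangle h+\langle D_s h,v\rangle m+D_s(\langle m,h\rangle v)\bigr)$ rather than $\tfrac12\langle D_s m,v\rangle h$. Consistently with this, the final check you propose would fail: $K_1(h)=\tfrac12\langle D_s m,v\rangle(I+AD_s^2)h$ is not symmetric in $m$ and $h$, so the operator in (\ref{cov_der}) cannot be produced by the Koszul formula as the torsion-free Levi-Civita connection you set out to compute.

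The paper takes a genuinely different, and weaker, route, which you would need to adopt (or justify replacing) to arrive at the stated formula: following the proof of Theorem~2.1 in \cite{Schulz}, it uses only the product rule $d(g(h,k))[m]=g(\nabla_m h,k)+g(h,\nabla_m k)$, splits the single variation $d\bigl(g^1_c(h,k)\bigr)[m]$ symmetrically in $h$ and $k$ with the help of the self-adjointness (\ref{selfadjoint}) of $D_s^2$, and reads $\nabla_m h=L_1^{-1}(K_1(h))$ off that splitting; the two additional Koszul terms you introduce are never formed there. As written, the central claim of your argument --- the collapse of the Koszul combination to the symmetric single term --- is unverified and in fact false, so the proposal does not establish the theorem in the form stated.
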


\begin{remark}
	The inverse operator $L_1^{-1}$ in Theorem~\ref{theorem_covdervH1} is an integral operator whose kernel has an expression in terms of the arc length distance between two points on a curve and their unit normal vectors (cf.~\cite{MichorMumford}). For the existence and more details about $L_1^{-1}$ we refer to \cite{MichorMumford}. 
\end{remark}

\emph{Proof of Theorem~\ref{theorem_covdervH1}.}
Let $h,k,m$ be vector fields on $\mathbb{R}^2$ along $c\in\text{Emb}(S^1,\mathbb{R}^2)$.
Moreover, $d(\cdot)[m]$ denotes the directional derivative in direction $m$.
\begin{rrev}
	From \cite[Subsection 4.2, formula (3)]{MichorMumford}, we have
	\begin{equation}
	\label{DerivativeL}
	d(L(h))[m]=A\left<D_sm,v\right>D_s^2h+AD_s\left<D_sm,v\right>D_sh
	\end{equation}
	Applying (\ref{DerivativeL}), we obtain in analogy to the computations in  \cite[Subsection 4.2]{MichorMumford}
\end{rrev}
\begin{equation}
\label{Dg1}
\begin{split}
& d\left(g_c^1(h,k)\right)[m] =d\left(\int_{S^1}\left<L_1(h),k\right>ds\right)[m] \\
&=\int_{S^1}\left<d\left(L_1(h)\right)[m],k\right>ds+\int_{S^1}\left<L_1(h),k\right>\begin{rrev}
\left<D_s m,v\right>ds 
\end{rrev} \\
&\hspace*{-2mm}\begin{rrev}\stackrel{(\ref{DerivativeL})}{=}\int_{S^1}\left<A\left<D_sm,v\right>D_s^2h+AD_s\left<D_sm,v\right>D_sh,k\right>ds\end{rrev}\\
&\begin{rrev}\phantom{\int_{S^1}}+\int_{S^1}\left<L_1(h),k\right>\left<D_s m,v\right>ds.\end{rrev} 
\end{split}
\end{equation}
Since the differential operator $D_s$ is anti self-adjoint for the $L^2$-metric $g^0$, i.e., 
\rrevkathrin{
	\begin{equation}
	\label{selfadjoint}
	\int_{S^1} \left<D_sh,k\right>ds=\int_{S^1} \left<h,-D_sk\right>ds,
	\end{equation}
}
\begin{rrev}
	we get from (\ref{Dg1})
	\begin{equation}
	\label{Dg2}
	\begin{split}
	d\left(g_c^1(h,k)\right)[m] &=\int_{S^1}2A\left<D_s m,v\right>\left<D_s^2h,k\right>ds+\int_{S^1}\left<h,k\right>\left<D_s m,v\right>ds\\
	&\hspace*{.6cm}-\int_{S^1}A\left<D_s^2h,k\right>\left<D_s m,v\right>ds\\
	&=\int_{S^1}\left<D_s m,v\right>\left(\left<h,k\right>+A\left<D_s^2h,k\right>\right)ds  .
	\end{split}
	\end{equation}
\end{rrev}

Now, we proceed analogously to the proof of Theorem~2.1 in \cite{Schulz}, which exploits the product rule for Riemannian connections. Thus, we conclude from
\begin{align}
& d\left(g_c^1(h,k)\right)[m]\nonumber\\
& \stackrel{(\ref{Dg2})}=\int_{S^1}\left<D_s m,v\right>\left[\frac{1}{2}\left(\left<h,k\right>+A\left<D_s^2h,k\right>\right)+\frac{1}{2}\left(\left<h,k\right>+A\left<D_s^2h,k\right>\right)\right]ds\nonumber\\
&\stackrel{\rrevkathrin{(\ref{selfadjoint})}}=\int_{S^1}\left<\frac{1}{2}\left<D_s m,v\right>\left(I+AD_s^2\right)h,k\right>+\left<h,\frac{1}{2}\left<D_s m,v\right>\left(I+AD_s^2\right)k\right>ds\nonumber\\
&\hspace*{2.3mm}=\int_{S^1}\left<L_1\left[L_1^{-1}\left(\frac{1}{2}\left<D_s m,v\right>\left(I+AD_s^2\right)h\right)\right],k\right>ds\nonumber\\
&\hspace*{.6cm}+\int_{S^1}\left<h,L_1\left[L_1^{-1}\left(\frac{1}{2}\left<D_s m,v\right>\left(I+AD_s^2\right)k\right)\right]\right>ds\nonumber\\
&\hspace*{2.3mm}=g_c^1\left(L_1^{-1}\left(\frac{1}{2}\left<D_s m,v\right>\left(I+AD_s^2\right)h\right),k\right)\nonumber\\
&\hspace*{.6cm}+g_c^1\left(h,L_1^{-1}\left(\frac{1}{2}\left<D_s m,v\right>\left(I+AD_s^2\right)k\right)\right)\nonumber
\end{align}
that the covariant derivative associated with $g^1$ is given by (\ref{cov_der}).\qed

\begin{rrev}
	\begin{remark}
		For the sake of completeness it should be mentioned that the shape space $B_e(S^1,\mathbb{R}^2)$ and its theoretical results can be generalized to higher dimensions.
		Let $M$ be a compact manifold and let $N$ denote a Riemannian manifold with $\text{\emph{dim}}(M)<\text{\emph{dim}}(N)$. In \cite{MichorMumford2}, the \emph{space of all submanifolds} of type $M$ in $N$ is defined by
		\begin{equation}
		B_e(M,N):=\text{\emph{Emb}}(M,N)/\text{\emph{Diff}}(M).
		\end{equation}
		In Figure \ref{figure_Be}, the left picture illustrates a two-dimensional shape which is an element of the shape space $B_e(S^2,\mathbb{R}^3)$. In contrast, the right shape in this figure is a two-dimensional shape which is not an element of this shape space. Note that the vanishing geodesic distance phenomenon occurs also for the $L^2$-metric in higher dimensions as verified in \cite{MichorMumford2}. For the definition of the Sobolev metric $g^1$ in higher dimensions we refer to \cite{BauerHarmsMichor}.
	\end{remark}
\end{rrev}

\begin{figure}
	\begin{minipage}{.5\textwidth}
		\vspace*{-8mm}
		\hspace{-.7cm}
		\includegraphics[width=.63\linewidth]{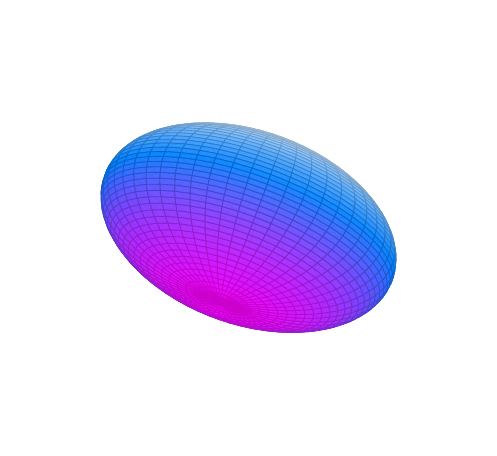}
		\hspace{-1.7cm}
		\vspace{-1cm}
		\includegraphics[width=.71\linewidth]{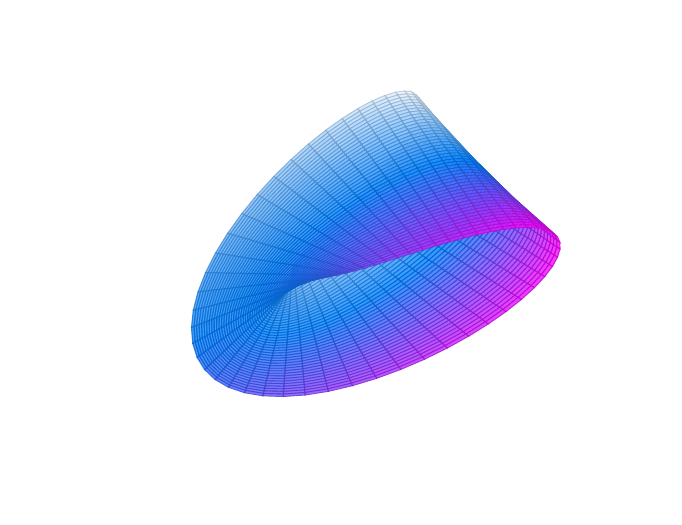}\\[9pt]
		Example of an element of $B_e(S^2,\mathbb{R}^3)$ (left) and a shape which is not an element of $B_e(S^2,\mathbb{R}^3)$ (right).
	\end{minipage}
	\hspace{.3cm}
	\begin{minipage}{.43\textwidth}
		\begin{center}
			\includegraphics[width=0.35\linewidth]{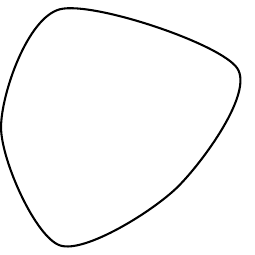}\hspace{.7cm}
			\includegraphics[width=0.35\linewidth]{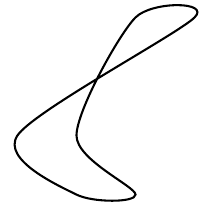}\\[5pt]
		\end{center}
		Example of an embedding (left) and an immersion which is not an embedding (right).
	\end{minipage}
	\caption{Examples of one- and two-dimensional shapes.}
	\label{figure_Be}
\end{figure}

\subsubsection{Optimization in the space of smooth shapes}
\label{Subsubesction:OptimizationBe}

\begin{rrev}
	In the following, we focus on two Riemannian metrics  on the space of smooth shapes $B_e$, the first Sobolev metric $g^1$ introduced in Subsection~\ref{Subsubsection:Be} and the  Steklov-Poincar\'{e} metric $g^S$ defined below.
	The aim of this subsection is to provide some optimization techniques in $(B_e,g^1)$ and $(B_e,g^S)$.  
	
	The subsection is structured in three paragraphs.
	The first paragraph considers the first Sobolev metric $g^1$, where we repeat firstly some relevant results from \cite{gsi2017}. Afterwards, we built on our findings of the previous subsection and extend the results in \cite{gsi2017}. More precisely, thanks to the specification of the covariant derivative associated with $g^1$ in the previous subsection, we are able to define  the Riemannian shape Hessian with respect to $g^1$ and formulate the Newton method in $(B_e,g^1)$, which is based on this definition. 
	As we will see below, if we consider Sobolev metrics, we have to deal with surface formulations of shape derivatives. 
	An intermediate and equivalent result in the process of deriving these expressions is the volume expression as already mentioned above.
	These volume expressions are preferable over surface forms.
	This is not only because of saving analytical effort, but also due to additional regularity assumptions, which usually have to be required in order to transform volume into surface forms, as well as because of saving programming effort.
	However, in the case of the more attractive volume formulation, the shape manifold $B_e$ and the corresponding inner products $g^1$ are not appropriate. 
	One possible approach to use volume forms is addressed in the second paragraph of this subsection, which considers Steklov-Poincar\'{e} metrics. 
	We summarize some of the main results related to this metric from \cite{schulz2015Steklov} with view on optimization methods.
	Finally, the third paragraph of this subsection considers a specific example and concludes this subsection with a brief discussion about the two approaches resulting from considering the first Sobolev and the Steklov-Poincar\'{e} metric. Since this paper does not focus on numerical investigations, we pick an example which is already implemented in \cite{schulz2015Steklov,Welker} to illustrate the main differences between the two approaches.
	
\end{rrev}

\paragraph{Optimization based on first Sobolev metrics.}
\label{SubsectionSob}

We consider the Sobolev metric $g^1$ on the shape space $B_e$. \kathrin{In particular, this means that we consider elements of $B_e$, i.e., smooth boundaries $\Gamma$ of the domain $\Omega$ under consideration in the following.} The Riemannian connection with respect to this metric, which is given in Theorem~\ref{theorem_covdervH1}, makes it possible to specify the Riemannian shape Hessian of an optimization problem. 

First, we detail the \emph{Riemannian shape gradient} \begin{rrev}
	from  \cite{gsi2017}.
\end{rrev}
Due to the Hadamard Structure Theorem, there exists a scalar distribution $r$ on the boundary $\Gamma$ of the domain $\Omega$ under consideration. If we assume $r\in L^1(\Gamma)$, the shape derivative can be expressed on the boundary $\Gamma$ of $\Omega$ (cf.~(\ref{bound_form})). 
The distribution $r$ is often called the \emph{shape gradient} in the literature. 
However, note that gradients depend always on chosen scalar products defined on the space under consideration. Thus, it rather means that $r$ \rrevkathrin{should be} the usual $L^2$-shape gradient, i.e., \begin{rrev}the gradient with respect to the $L^2$-scalar product\end{rrev}. If we want to optimize on a shape manifold, we have to find a representation of the shape gradient with respect to a Riemannian metric defined on the shape manifold under consideration. This representation is called the Riemannian shape gradient.
The shape derivative can be expressed more concisely as
\begin{equation}
\label{concisely_expression}
DJ_\Gamma[V]=\int_\Gamma \alpha r \ ds
\end{equation}
if $V\,\rule[-2mm]{.1mm}{4mm}_{\hspace{.6mm}\partial\Omega}=\alpha n$.
In order to get an expression of the Riemannian shape gradient with respect to the Sobolev metric $g^1$, we look at the isomorphism (\ref{isomorphismTcBe}). Due to this isomorphism, a tangent vector $h\in T_\Gamma B_e$ is given by $h=\alpha n$ with $\alpha\in \mathcal{C}^\infty(\Gamma)$. This leads to the following definition.

\begin{definition}[Riemannian shape gradient with respect to the \begin{rrev}first\end{rrev} Sobolev metric]
	\label{definition_shapegradient}
	A Riemannian representation of the shape derivative, i.e., the Riemannian shape gradient of a shape differentiable objective function $J$ in terms of the first Sobolev metric~$g^1$, is given by
	\begin{equation}
	\label{RiemannianShapeGradient}
	\text{\emph{grad}}(J)=qn \ \text{ with } \ (I-A\kathrin{D^2_s})q\kathrin{n}=r\kathrin{n},
	\end{equation}
	where \kathrin{$D_s$ is the arc length derivative with respect to $\Gamma\in B_e$}, $A>0$, $q\in \mathcal{C}^\infty(\Gamma)$ and $r$ denotes \begin{rrev}the function in the shape derivative representation (\ref{bound_form}) for which we assume\end{rrev} \kathrin{$r\in \mathcal{C}^\infty(\Gamma)$}.
\end{definition}

Next, we specify the \emph{Riemannian shape Hessian} \begin{rrev}with respect to the first Sobolev metric\end{rrev}. It is based on the Riemannian connection $\nabla$ related to the Sobolev metric $g^1$ \begin{rrev}given in one of the main theorems of this paper, Theorem~\ref{theorem_covdervH1}, as well as on the Riemannian shape gradient definition, Definition~\ref{definition_shapegradient}\end{rrev}. 
In analogy to \cite{Absil}, we can define the Riemannian shape Hessian as follows:

\begin{definition}[Riemannian shape Hessian \begin{rrev}with respect to the first Sobolev metric\end{rrev}]
	\label{definition_Hessian}
	\kathrin{Let $\nabla$ be the covariant derivative associated with the  Sobolev metric $g^1$.}
	The Riemannian shape Hessian \kathrin{with respect to the Sobolev metric $g^1$} of a two times shape differentiable objective function $J$ is defined as the linear mapping
	\begin{equation}
	\label{Hessian_Sobolev}
	T_\Gamma B_e \to T_\Gamma B_e \text{, } h \mapsto \text{\emph{Hess}}(J)[h]:= \nabla_h \text{\emph{grad}}(J).
	\end{equation}
\end{definition} 

The Riemannian shape gradient and the Riemannian shape Hessian with respect to the Sobolev metric $g^1$ are required to \begin{rrev}apply first and second order optimization methods\end{rrev} in the shape space $(B_e,g^1)$. 
\begin{rrev}
	The gradient method is an example for a first order optimization method. If we apply the gradient method to (\ref{minproblem}) and consider $g^1$ on $B_e$, we need to compute the Riemannian shape gradient with respect to $g^1$ from (\ref{RiemannianShapeGradient}). The negative gradient is then used as descent direction for the objective functional $J$.
	An example for a second order method is the Newton method. 
	If we apply the Newton method to (\ref{minproblem}), we need to solve---similarly to standard non-linear programming---the\end{rrev}
\kathrin{
	problem of finding $\Gamma\in B_e$ with 
	\begin{equation}\label{Newton-problem}
	\text{grad}\hspace{.3mm}(J(\Omega))=0,
	\end{equation}
	where $\Gamma$ denotes the boundary of $\Omega$ and $\text{grad}\hspace{.3mm}J$ is the gradient with respect to $g^1$ (cf. (\ref{RiemannianShapeGradient})). 
	
	\begin{rrev}
		In general, the calculations of optimization methods on manifolds have to be performed in tangent spaces. 
		This means, points from a tangent space have to be mapped to the manifold in order to get a new iterate.
		\begin{rrrev}
			More precisely, we need to take a given tangent vector to the manifold, run along the geodesic starting at that point and go in that direction for a special length defined by the optimization process.
		\end{rrrev}
		The computation of the \rrevkathrin{Riemannian} exponential map, which is the theoretically superior choice of such a mapping, is prohibitively expensive in the most applications. 
		However, in \cite{Absil}, it is shown that a so-called \emph{retraction} is a first-order approximation and sufficient.
	\end{rrev}

	\begin{definition}[Retraction]
		\label{definition_retraction}
		A retraction on a manifold $M$ is a smooth mapping $\mathcal{R}\colon TM\to M$ with the following properties:
		\begin{itemize}
			\item[(i)] $\mathcal{R}_p(0_p)=p$, where $\mathcal{R}_p$ denotes the restriction of $\mathcal{R}$ to $T_pM$ and $0_p$ denotes the zero element of $T_pM$.
			\item[(ii)] $d\mathcal{R}_p(0_p)=\text{id}_{T_pM}$, where $\text{id}_{T_pM}$ denotes the identity mapping on $T_pM$ and $d\mathcal{R}_p(0_p)$ denotes the pushforward of $0_p\in T_pM$ by $\mathcal{R}$.
		\end{itemize}
	\end{definition}
	
	\noindent
	For example, in $B_e(S^1,\mathbb{R}^2)$, \begin{rrrev} for sufficiently small perturbations $\alpha \in \mathcal{C}^\infty(S^1)$, a retraction $\mathcal{R}$ is defined by
		\begin{equation}
		\label{retraction}
		\mathcal{R}_c(\eta_c):= c+\eta_c,
		\end{equation}
		where $\eta_c\in T_cB_e(S^1,\mathbb{R}^2)$ and  $
		c+\eta_c\colon S^1 \to \mathbb{R}^2,\,
		\theta \mapsto c(\theta)+\alpha(\theta)n(c(\theta))
		$.
	\end{rrrev}
	
	\begin{rrev}
		Now, we are able to formulate the gradient in $(B_e,g^1)$ (cf.~Algorithm~\ref{Algo:Gradient_g1}). 
		Thanks to the definition of the Riemannian shape Hessian $\text{Hess}\hspace{.3mm}J$ in $(B_e,g^1)$, which is based on the resulting Riemannian connection $\nabla$ given in (\ref{cov_der}), we can formulate also the Newton method in $(B_e,g^1)$ (cf.~Algorithm~\ref{Algo:Newton}).
		We require the shape function $r$ of the surface shape derivative (cf.~(\ref{bound_form})) in both algorithms. This function is needed to compute the shape gradient with respect to $g^1$ in each iteration. 
		In~Algorithm~\ref{Algo:Newton},\end{rrev}
	both, the Riemannian shape gradient and the Riemannian shape Hessian \begin{rrev}with respect to $g^1$\end{rrev}, are required. 
	If we have a PDE constrained shape optimization problem, the Newton method can be applied to find stationary points of the Lagrangian of the optimization problem which leads to the Lagrange-Newton method.
	
	\begin{algorithm}
		\begin{rrev}
			\caption{Gradient method in $(B_e,g^1)$}
			\label{Algo:Gradient_g1}
			\begin{algorithmic}
				\State \textbf{Require:} Objective function $J$ on $(B_e,g^1)$; retraction $\mathcal{R}$ on $(B_e,g^1)$; function $r$ of the surface shape derivative (cf.~(\ref{bound_form})).
				\vspace{.1cm}
				\State \textbf{Goal:} Find the solution of $\min\limits_{\xi\in B_e}J(\xi)$.
				\State \textbf{Input:} Initial shape $\xi^0\in B_e$. 
				\vspace{.3cm}
				
				\State \textbf{for} $k=0,1,\dots$ \textbf{do}
				\vspace{.1cm}
				\State Set 
				\begin{equation}
				\label{Update_g1}
				\xi^{k+1}:= \mathcal{R}_{\xi^k}\left(-\alpha^k \text{grad}(J(\xi^k))\right)
				\end{equation}
				for some steplength $\alpha^k$, where  $\text{grad}(J(\xi^k))$ denotes the shape gradient of $J$ in $\xi^k$ with respect to $g^1$.
				\vspace{.1cm}
				\State \textbf{end for}
				\vspace{.3cm}
			\end{algorithmic}
		\end{rrev}
	\end{algorithm}

	\begin{algorithm}
		\begin{rrev}
			\caption{Newton method in $(B_e,g^1)$}
			\label{Algo:Newton}
			\begin{algorithmic}
				\State \textbf{Require:} Objective function $J$ on $(B_e,g^1)$; retraction $\mathcal{R}$ on $(B_e,g^1)$; function $r$ of the surface shape derivative (cf.~(\ref{bound_form})); covariant derivative $\nabla$ on $(B_e,g^1)$.
				\vspace{.1cm}
				\State \textbf{Goal:} Find the solution of $\min\limits_{\xi\in B_e}J(\xi)$.
				\State \textbf{Input:} Initial shape $\xi^0\in B_e$.
				\vspace{.3cm}
				
				\State \textbf{for} $k=0,1,\dots$ \textbf{do}
				\vspace{.1cm}
				\State [1] Compute the increment $\Delta \xi^k $ as solution of the so-called \emph{Newton equation}
				\begin{equation}\label{newton}
				\text{Hess}(J(\xi^k))[\Delta \xi^k]=-\text{grad}(J(\xi^k)),
				\end{equation}
				where $\text{grad}(J(\xi^k))$ and $\text{Hess}(J(\xi^k))$ denote the shape gradient and the shape Hessian of $J$ in $\xi^k$ with respect to $g^1$, respectively.
				\vspace{.1cm}
				\State [2] Set $
				\xi^{k+1}:= \mathcal{R}_{\xi^k}(\Delta \xi^k)$.
				\vspace{.1cm}
				\State \textbf{end for}
				\vspace{.3cm}
			\end{algorithmic}
		\end{rrev}
	\end{algorithm}
	
}

\paragraph{Optimization based on Steklov-Poincar\'{e} metrics.}
\label{SubsectionSP}

\begin{rrev}
	Gradients with respect to $g^1$ are based on surface expressions of shape derivatives as you can see in (\ref{RiemannianShapeGradient}), where $r$ is the function in the surface shape derivative representation (\ref{bound_form}). 
	As outlined at the beginning of this section, volume expressions are preferable over surface forms.
\end{rrev} 
One possible approach to use volume forms is to consider \emph{Steklov-Poincar\'{e} metrics} $g^S$ (cf.~\cite{schulz2015Steklov}).
\begin{rrev}
	In the following, we summarize some of the main results related to this metric from \cite{schulz2015Steklov} with view on first order optimization approaches in the space of smooth shapes. In order to be able to formulate higher order methods in $(B_e,g^S)$ an explicit expression of the covariant derivative with respect to the Sobolev metric is necessary. The derivation of such an expression and also the formulation and investigation of higher order methods in $(B_e,g^S)$ is not in the scope of this paper and left for future work.
\end{rrev}

\kathrin{In the following, we need to deal with Lipschitz boundaries. Since there are several competing conditions which are
	used to define a Lipschitz boundary, we first specify its definition:
	\begin{definition}[$\mathcal{C}^{k,r}$-boundary, Lipschitz boundary]
		Let $\Omega\subset\mathbb{R}^d$ be open with boundary $\Gamma=\partial\Omega$. 
		Moreover, let $k\in \overline{\mathbb{N}}$ and $ \mathcal{C}^{k,r}(\overline{\Omega})$ denote the set of $\mathcal{C}^k$-functions which are Hölder-continuous with exponent $r\in[0,1]$.
		Further, $B_d(x,R)$ denotes the ball in $\mathbb{R}^d$ centered at $x\in\mathbb{R}^d$ with radius $R>0$.
		We say $\Omega$ has a $\mathcal{C}^{k,r}$-boundary or $\Omega$ is $\mathcal{C}^{k,r}$ if for any $x\in\Gamma$ there exist
		local coordinates $y_1,...,y_d$ centered at $x$, i.e., such that $x$ is the unique solution of $y_1=\dots=y_d=0$, and constants $a,b>0$ as well as a mapping $\psi\in \mathcal{C}^{k,r}( B_{d-1}(x,a))$, where $B_{d-1}(x,a)$ is considered in the linear subspace defined by $(y_1,...,y_{d-1})$, subject to the following conditions:
		\begin{itemize}
			\item[(i)] $y_d=\psi(\widetilde{y})  \Rightarrow  (\widetilde{y},y_d)\in\Gamma$,
			\item[(ii)] $\psi(\widetilde{y})<y_d<\psi(\widetilde{y})+b  \Rightarrow  (\widetilde{y},y_d)\in\Omega$,
			\item[(iii)] $\psi(\widetilde{y})-b<y_d<\psi(\widetilde{y})  \Rightarrow  (\widetilde{y},y_d)\not\in\overline{\Omega}$.
		\end{itemize}
	\end{definition}
}

\begin{definition}[Steklov-Poincar\'{e} metric]
	\label{definition_Steklovme}
	\kathrin{Let $\Omega\subset X\subset\mathbb{R}^d$ be a compact domain with $\Omega\neq\emptyset$ and \begin{rrev}Lipschitz\end{rrev}-boundary $\Gamma:=\partial \Omega$, where $X$ denotes a bounded domain with Lipschitz-boundary $\Gamma_\text{\emph{out}}:=\partial X$.}
	The Steklov-Poincar\'{e} metric is given by
	\begin{equation}\label{definition_Steklovmetric}
	\begin{split}
	g^S\colon H^{1/2}(\Gamma)\times H^{1/2}(\Gamma) & \to \mathbb{R},\\
	(\alpha,\beta) &\mapsto 
	\int_{\Gamma} \alpha(s)\cdot [(S^{pr})^{-1}\beta](s)\ ds.
	\end{split}
	\end{equation}
	Here $S^{pr}$ denotes the projected Poincar\'e-Steklov operator which is given by
	\begin{equation}
	S^{pr}\colon  H^{-1/2}(\Gamma_\text{\emph{out}}) \to H^{1/2}(\Gamma_\text{\emph{out}}),\
	\alpha \mapsto (\gamma_0 U)^T n,
	\end{equation}
	where $\gamma_0\colon  H^1_0(X,\mathbb{R}^d) \to H^{1/2}(\Gamma_\text{\emph{out}},\mathbb{R}^d)$, $U \mapsto U\,\rule[-2mm]{.1mm}{4mm}_{\, \Gamma_\text{\emph{out}}}$ and $U\in H^1_0(X,\mathbb{R}^d)$ solves the Neumann problem
	\begin{equation}\label{weak-elasticity-N2}
	a(U,V)=\int_{\Gamma_\text{\emph{out}}} \alpha\cdot (\gamma_0 V)^T n\ ds\quad \forall\hspace{.3mm}  V\in H^1_0(X,\mathbb{R}^d)
	\end{equation}
	with $a(\cdot,\cdot)$ being a symmetric and coercive bilinear form.
\end{definition}

\begin{remark}
	Note that a Steklov-Poincar\'{e} metric depends on the choice of the bilinear form. Thus, different bilinear forms lead to various Steklov-Poincar\'{e} metrics.
\end{remark}

Next, we state the connection of $B_e$ with respect to the Steklov-Poincar\'{e} metric $g^S$ to shape calculus.
As already mentioned, the shape derivative can be expressed as the surface integral (\ref{bound_form})
due to the Hadamard Structure Theorem. Recall that the shape derivative can be written more concisely (cf.~(\ref{concisely_expression})).
Due to isomorphism~(\ref{isomorphismTcBe}) and expression~(\ref{concisely_expression}), we can state the connection of the shape space $B_e$ with respect to the Steklov-Poincar\'{e} metric $g^S$ to shape calculus. 

\begin{definition}[Shape gradient with respect to Steklov-Poincar\'{e} metric]
	\label{Def:Steklov}
	Let $r\in \mathcal{C}^\infty(\Gamma)$ denote the \begin{rrev}function in the shape derivative expression\end{rrev} (\ref{bound_form}). Moreover, let $S^{pr}$ be the projected Poincar\'e-Steklov operator and let $\gamma_0$ be as in Definition~\ref{definition_Steklovme}. A representation $h\in T_{\Gamma} B_e\cong\mathcal{C}^\infty(\Gamma)$ of the shape gradient in terms of $g^S$ is determined by
	\begin{equation}
	g^S(\phi,h)=\left(r,\phi\right)_{L^2(\Gamma)} \quad \forall \phi\in \mathcal{C}^\infty(\Gamma),
	\end{equation}
	which is equivalent to
	\begin{equation}
	\label{equation_sgSteklov}
	\int_{\Gamma} \phi(s)\cdot [(S^{pr})^{-1}h](s) \ ds=\int_{\Gamma} r(s)\phi(s) \ ds \quad  \forall \phi\in \mathcal{C}^\infty(\Gamma).
	\end{equation}
\end{definition}

\kathrin{
	\begin{remark}
		In Definition~\ref{Def:Steklov}, the isomorphism $T_{\Gamma} B_e\cong\mathcal{C}^\infty(\Gamma)$  is given. It is worth to mention that for example identifying $\Gamma$ with the corresponding embedding of the circle leads to this isomorphism. In particular, attention needs to be put onto \begin{rrev}(\ref{isomorphismTcBe}) and\end{rrev} (\ref{TangentSpace}).
	\end{remark}
}

Now, the shape gradient with respect to Steklov-Poincar\'{e} metric is defined. This enables the formulation of optimization methods in $B_e$ which involve volume formulations of shape derivatives.
From (\ref{equation_sgSteklov}) we get $h=S^{pr}r=(\gamma_0 U)^T n$, where $U\in H^1_0(X,\mathbb{R}^d)$ solves
\begin{equation}
\label{eq_most_important}
a(U,V)=\int_{\Gamma} r\cdot (\gamma_0 V)^T n \ ds
=DJ_{\Gamma}[V]=DJ_\Omega[V] \quad \forall \hspace{.3mm} V\in H^1_0(X,\mathbb{R}^d)
\end{equation}
\begin{rrev}
	with $a(\cdot,\cdot)$ being the symmetric and coercive bilinear form on $H_0^1(X,\mathbb{R}^d) \times H_0^1(X,\mathbb{R}^d)$ of the Steklov-Poincar\'{e} metric definition (cf.~(\ref{weak-elasticity-N2})).
\end{rrev}
\begin{rrev}
	The identity (\ref{eq_most_important}) opens the door to consider volume expression of shape derivatives to compute the shape gradient with respect to $g^S$.
	In order to compute the shape gradient, 
\end{rrev} 
we have to solve 
\begin{equation}
a(U, V) = b(V) \begin{rrev}\quad \forall \hspace{.3mm} V\in H^1_0(X,\mathbb{R}^d)\end{rrev}
\label{deformatio_equation}
\end{equation}
with $b(\cdot)$ being a linear form and given by
$$b(V):=DJ_\text{vol}(\Omega)[V]+DJ_\text{surf}(\Omega)[V].$$
Here $J_\text{surf}(\Omega)$ denotes parts of the objective function leading to surface shape derivative expressions, e.g., perimeter regularizations, and is incorporated as Neumann boundary condition \begin{rrev}in equation (\ref{deformatio_equation})\end{rrev}. Parts of the objective function leading to volume shape derivative expressions are denoted by $J_\text{vol}(\Omega)$. 
The \kathrin{bilinear form} $a(\cdot,\cdot)$ can be chosen, e.g., as the weak form of the linear elasticity equation. \begin{rrev}More details can be found below, in the paragraph about the comparison of Algorithm \ref{Algo:Gradient_g1} and Algorithm \ref{Algo:Gradient}.\end{rrev} 

\begin{remark}
	Note that it is not ensured that $U\in H^1_0(X,\mathbb{R}^d)$ is $\mathcal{C}^\infty$. Thus, $h=S^{pr}r=(\gamma_0 U)^\top n$ is not necessarily an element of $T_{\Gamma}B_e$.
	However, under special assumptions depending on the coefficients of a second-order partial differential operator and the right-hand side of a PDE, a weak solution $U$ which is at least $H^1_0$-regular is $\mathcal{C}^\infty$ (cf.~\cite[Section~6.3, Theorem~6]{Evan}).
\end{remark}

\begin{rrev}
	Thanks to the definition of the gradient with respect to $g^S$ we are able to formulate the gradient method on $(B_e,g^S)$ (cf.~Algorithm~\ref{Algo:Gradient}).
	We compute the Riemannian shape gradient with respect to $g^S$ from (\ref{deformatio_equation}). The negative solution $-U$ is then used as descent direction for the objective functional $J$. 
	In Algorithm~\ref{Algo:Gradient}, in order to be in line with the above theory, it is assumed that in each iteration $k$, the shape $\xi^k$ is a subset of a general surrounding space $X$, which is assumed to be a bounded domain with Lipschitz boundary as illustrated in Figure~\ref{fig:Domain}. 
\end{rrev}

\begin{algorithm}
	\begin{rrev}
		\caption{Gradient method in $(B_e,g^S)$}
		\label{Algo:Gradient}
		\begin{algorithmic}
			\State \textbf{Require:} Objective function $J$ on $(B_e,g^S)$; retraction $\mathcal{R}$ on $(B_e,g^S)$; symmetric and coercive bilinear form $a(\cdot,\cdot)$; shape derivative $dJ(\cdot)[\cdot]$ in volume or surface form or a combination of both; surrounding bounded domain $X$ with Lipschitz boundary, which includes all shape iterates.
			\vspace{.1cm}
			\State \textbf{Goal:} Find the solution of $\min\limits_{\xi\in B_e}J(\xi)$.
			\State \textbf{Input:} Initial shape $\xi^0\in B_e$ with $\xi^0 \subset X$. 
			\vspace{.3cm}
			
			\State \textbf{for} $k=0,1,\dots$ \textbf{do}
			\vspace{.1cm}
			\State [1] Solve $a(U^k,V)=dJ(\xi^k)[V] $ for all vector fields $V$ with $H^1_0$-regularity on $X$.
			\vspace{.1cm}
			\State [2] Set 
			\begin{equation}
			\label{Update_gS}
			\xi^{k+1}:= \mathcal{R}_{\xi^k}(-\alpha^k ((\gamma_0 U^k)^T n^k))
			\end{equation}
			for some steplength $\alpha^k$, where $n^k$ denotes the unit outward normal vector  to $\xi^k$ and $\gamma_0$ is the trace operator.
			\vspace{.1cm}
			\State \textbf{end for}
			\vspace{.3cm}
		\end{algorithmic}
	\end{rrev}
\end{algorithm}

\begin{rev}
	\paragraph{Comparison of Algorithm \ref{Algo:Gradient_g1} and Algorithm \ref{Algo:Gradient}.}
	\label{SubNum}
	
	We conclude this section with a brief discussion about \begin{rrev}
		Algorithm \ref{Algo:Gradient_g1} and Algorithm \ref{Algo:Gradient}.
		Since this paper does not focus on numerical investigations, we pick an example which is already implemented in \cite{schulz2015Steklov,Welker}. We summarize briefly numerical results observed in \cite{schulz2015Steklov,Welker} in order to
	\end{rrev}
	illustrate how the algorithms work. \begin{rrev}Additionally, we discuss the main differences between the two approaches.\end{rrev}
	
	\begin{figure}
		\vspace*{.3cm}
		\begin{center}
			\begin{overpic}[width=.45\textwidth]{./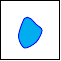}
				\put(40,40){$\Omega$}
				\put(10,13){$X\setminus \overline{\Omega}$}
				\put(38,103){$\Gt $}
				\put(43,74){$\color{blue}\Gi $}
				\put(-11,47){$\Gl $}
				\put(101,47){$\Gr $}
				\put(35,-8){$\Gb $}
			\end{overpic}
		\end{center}
		\vspace*{.3cm}
		\caption{Example of the domain $X$ with $\Omega\subset X\subset\mathbb{R}^2$.}
		\label{fig:Domain}
	\end{figure}
	
	Let  $\Omega\subset X\subset \mathbb{R}^2$ be a domain with $ \partial\Omega = \Gi$, where $X$ denotes a bounded domain with Lipschitz-boundary $\Gamma_\text{out}:=\partial X$. In contrast to the outer boundary $\Go$, which is assumed to be fixed and partitioned in $\Go:=\Gb\sqcup\Gl\sqcup\Gr\sqcup\Gt$ (here, $\sqcup$ denotes the disjoint union), the inner boundary $\Gi$, which is also called the interface, is variable. Let the interface $\Gi$ be an element of $B_e(S^1,\mathbb{R}^2)$. 
	Note that $X$ depends on $\Gi$. Thus, we denote it by $X(\Gi)$. 
	Figure~\ref{fig:Domain} illustrates this situation. 
	We consider the following parabolic PDE constrained interface problem (cf.~\cite{schulz2015Steklov,Welker}):
	\begin{align}\label{oc1p}
	\hspace{-4.5cm}\min_{\Gi} \hspace{0,1cm} \intdxdt{(y-\overline{y})^2}+\mu\int_{\Gi}1\hspace{.5mm}ds
	\end{align}
	\vspace*{-.5cm}
	\begin{align}
	\label{oc2p}
	\mbox{s.t. } \td{y}- \mathrm{div}(k\nabla y)&=f\quad \text{in }X(\Gi)\times(0,T]
	\\
	\label{oc3p}
	\hspace{20mm}y&=1\quad \text{on }\Gt\times(0,T]
	\\
	\label{oc4p}
	\nd{y}&=0\quad \text{on }(\Gb\cup\Gl\cup\Gr)\times(0,T]
	\\
	\label{oc5p}
	y&=y_0\quad\text{in }X(\Gi)\times\{0\}
	\end{align}
	with 
	\begin{equation*}
	k:=\begin{cases}
	k_1 = \mathrm{const.}\quad\text{ in }X\setminus \overline{\Omega}\times(0,T]\\
	k_2 = \mathrm{const.} \quad\text{ in }\Omega\times(0,T]
	\end{cases}
	\end{equation*}
	denoting a jumping coefficient, $n$ being the unit outward normal vector to $\Omega$ and $\bar{y}\in H^1(X(\Gi))$ represents data measurements. 
	The second term in the objective function (\ref{oc1p}) is a perimeter regularization with $\mu>0$.
	Please note that formulation (\ref{oc2p}) of the PDE has to be understood only formally because of the jumping coefficient $k$.
	
	\begin{rrev}
		In order to solve the shape optimization problem (\ref{oc1p})-(\ref{oc5p}), we first need to solve the underlying PDE, the so-called \emph{state equation}.
	\end{rrev}
	The solution of the parabolic boundary value problem (\ref{oc2p})-(\ref{oc5p}) is obtained by discretizing its weak formulation with standard linear finite elements in space and an implicit Euler scheme in time.
	The diffusion parameter $k$ is discretized as a piecewise constant function. 
	Figure~\ref{fig_bfgs_deformations} illustrates an example initial shape geometry, where the domain is discretized with a fine and coarse finite element mesh. 
	\begin{rrev}Besides the underlying PDE, we also need to solve the corresponding adjoint problem to the shape optimization problem (\ref{oc1p})-(\ref{oc5p}), which is given in our example\end{rrev} by
	\begin{align}
	-\frac{\partial p}{\partial t}-\mathrm{div}(k\nabla p)\hspace{.3mm}&=-(y-\overline{y}) \quad \text{in }X(\Gi) \times [0,T)\label{adjoint1p}\\
	p\hspace{.3mm}&= 0\quad\text{in }X(\Gi)  \times \{T\}\label{adjoint2p}\\
	\nd{p}\hspace{.3mm}&=0\quad\text{on }\left(\Gb\cup\Gl\cup\Gr\right) \times [0,T)\label{adjoint5p}\\
	p\hspace{.3mm}&=0\quad\text{on }\Gt \times [0,T)\label{adjoint6p}
	\end{align}
	and which can be discretized in the same way as the state equation.
	
	\begin{rrev}
		\begin{remark}
			In general, the solution of the state and adjoint equation are needed in Algorithm~\ref{Algo:Gradient_g1} and Algorithm~\ref{Algo:Gradient} because they are part of the shape derivative of the objective functional. 
		\end{remark}
	\end{rrev}
	
	\begin{rrev}
		
		We use the retraction given in (\ref{retraction}) in order to update the shapes according to Algorithm~\ref{Algo:Gradient_g1} (cf.~(\ref{Update_g1})) and Algorithm~\ref{Algo:Gradient} (cf.~(\ref{Update_gS})), respectively. 
		This retraction is closely related to the perturbation of identity defined on the domain $X$. Given a stating shape $\Gamma^k$ in the $k$-th iteration of Algorithm~\ref{Algo:Gradient}, the perturbation of identity acting on the domain $X$ in the direction $U^k$, where $U^k$ solves (\ref{deformatio_equation}), gives
		\begin{equation}
		X(\Gamma^{k+1})=\{x\in X\colon x=x^k+t^kU^k\},
		\end{equation}
		i.e. the vector field $U^k$ weighted by a step size $t^k$ is added as a deformation to all nodes in the finite element mesh. One calls $U^k$ also \emph{mesh deformation (field)}. Here, the volume form allows us to optimize directly over the domain $X$ containing $\Gamma^k\in B_e$. It is worth to mention that, in practice, we are only interested in the deformation on $X$ because we need to update the finite element mesh after each iteration. The update of the shape $\Gamma^k$ itself is contained in this deformation field.
		In contrast to Algorithm~\ref{Algo:Gradient}, Algorithm~\ref{Algo:Gradient_g1} can only work with surface shape derivative expressions. 
		These surface formulations would give us descent directions (in normal directions) for $\Gamma^k$ only, which would not help us to move mesh elements around the shape. Additionally, when we are working with a surface shape derivative, we need to solve another PDE in order to get a mesh deformation in the ambient space $X$. Below, this issue is addressed in more detail.

		Both approaches follow roughly the same steps but with
		a major difference in the way of computing the mesh deformation.  
	\end{rrev}
	For convenience we summarize \begin{rrev}one optimization iteration and\end{rrev} the main aspects of the two approaches: 
	
	\begin{rrev}
		\begin{itemize}
			\itemsep5pt
			\item[1.] Solve the state and adjoint equation.
			\item[2.] Compute the mesh deformation:
			\vspace*{.1cm}
			\begin{itemize}
				\item \emph{Algorithm \ref{Algo:Gradient}}: 
				The computation of a representation of the shape gradient with respect to the chosen inner product of the tangent space is moved into the mesh deformation itself. In particular,  we get the gradient representation and the mesh deformation all at once from (\ref{eq_most_important}), which is very attractive from a computational point of view.
				The bilinear form $a(\cdot,\cdot)$ in (\ref{deformatio_equation}) is used as both, an inner product and a mesh deformation, leading to only one linear system, which has to be solved. In practice, the bilinear form $a(\cdot,\cdot)$ in (\ref{deformatio_equation})  is chosen as the weak form corresponding to the linear elasticity equation, i.e., 
				$$ a(U,V)=  \int_{X(\Gamma)} \sigma(U):\epsilon(V) \, dx, $$
				where $:$ denotes the sum of the component-wise products and $\sigma$, $\varepsilon$  are the so-called \emph{strain} and \emph{stress tensor}\footnote{Please see Remark~\ref{Lame} for the definition of the strain and stress tensor.}, respectively.
				In strong form, (\ref{deformatio_equation}) is given by
				\begin{align}
				\label{eq_linelas1}
				\text{div}( \sigma ) &= f^\text{elas} \quad \text{in} \quad X(\Gi)\\
				U &= 0 \quad \text{on} \quad \Go\label{eq_linelas2}
				\end{align}
				Here, the source term $f^\text{elas} $ in its weak form is given by the shape derivative parts in volume form, where parts of the objective function leading to surface expressions only, such as, for instance, the perimeter regularization, are incorporated in Neumann boundary conditions. In our example, the shape derivative is given by
				\begin{equation*}
				\label{boundary_expressionp}
				\begin{split}
				DJ(\Gamma)[V]
				=&\int_{0}^{T}\int_{X(\Gamma)}-k\nabla y^T\left(\nabla V+\nabla V^T\right)\nabla p-p\nabla f^T V\\
				&\phantom{\int_{0}^{T}\int_{X(\Gamma)}}+\mathrm{div}(V)\left(\frac{1}{2}(y-\overline{y})^2+\td{y}p+k\nabla y^T\nabla p-fp\right)dx\hspace{.3mm}dt\\
				&+\int_{\Gamma}\kappa \left<V,n\right>ds
				\end{split}
				\end{equation*}
				where $\kappa$ denotes the mean curvature of $\Gamma$ and $y, \,p$ denote the solution of the state and adjoint equation, respectively
				(cf.~\cite{schulz2014structure}).
				\item \emph{Algorithm \ref{Algo:Gradient_g1}:} 
				First, a representation of the shape gradient on $\Gi$ with respect to the Sobolev metric $g^1$ as given in (\ref{RiemannianShapeGradient}) needs to be computed by solving
				\begin{equation}
				(I-AD^2_s)qn=rn.
				\end{equation}
				In our example, $r$ is given by $r=\left\llbracket k \right\rrbracket\nabla y_1^T\nabla p_2n+\kappa n$, where $\kappa$ denotes the mean curvature of $\Gamma$, the jump symbol $\left\llbracket\cdot\right\rrbracket$ is defined on the interface $\Gamma$ by $\llbracket k \rrbracket := k_1-k_2$, $y_{1} := \text{tr}_{\text{out}}(y\vert_{X\setminus \overline{\Omega}})$ with $y$ denoting the solution of the state equation, $p_2 := \text{tr}_{\text{in}}(p \vert_{\Omega})$ with $p$ denoting the solution of the adjoint equation, and $\text{tr}_{\text{in}}\colon \Omega \rightarrow \Gamma$ and $\text{tr}_{\text{out}}\colon X\setminus \overline{\Omega} \rightarrow \Gamma$ are trace operators (cf.~\cite{schulz2014structure}).
				In order to compute a mesh deformation field, we need to solve a further PDE. In practice, this further PDE is again equation (\ref{eq_linelas1})-(\ref{eq_linelas2}) but modified as follows: the Dirichlet boundary condition
				\begin{equation*}
				U = U^\text{surf} \quad \text{on} \quad \Gi
				\end{equation*}
				is added to (\ref{eq_linelas1})-(\ref{eq_linelas2}),
				where $U^\text{surf}$ is the representation of the shape gradient with respect to the Sobolev metric $g^1$; the source term $f^\text{elas}$ is set to zero.
			\end{itemize}	
			\item[3.] Apply the resulting deformation to the current finite element mesh, and go to the next iteration.
		\end{itemize}
	\end{rrev}

	\begin{rrev}
		\begin{remark}
			\label{Lame}
			The strain and stress tensor in (\ref{eq_linelas1}) are defined by	$\sigma  :=\lambda \text{tr}(\varepsilon) I + 2 \mu \varepsilon$, $\varepsilon := \frac{1}{2}\left(\nabla U + \nabla U^T\right)$, where $\lambda$ and $\mu$ denote the so-called \emph{Lam\'{e} parameters}.
			The Lam\'{e} parameters  do not need to have a physical meaning here but it is rather essential to understand their effect on the mesh deformation. 
			They can be expressed
			in terms of Young's modulus $E$ and Poisson's ratio $\nu$ as $\lambda = \frac{\nu E}{(1+\nu)(1-2\nu)} ,\,
			\mu  = \frac{E}{2(1+\nu)}$.
			Young's modulus $E$ states the stiffness of the material, which enables to control the step size for the shape update, and Poisson's ratio $\nu$ gives the ratio controlling how much the mesh expands in the remaining coordinate directions when compressed in one particular direction.
		\end{remark}
	\end{rrev}

	Besides saving analytical effort during the calculation process of the shape derivative, Algorithm~\ref{Algo:Gradient} is computationally more efficient than using  Algorithm~\ref{Algo:Gradient_g1}. 
	The optimization algorithm based on domain shape derivative expressions (Algorithm~\ref{Algo:Gradient}) can be applied to very coarse meshes with approximately $100\,000$ cells (cf. right picture of Figure \ref{fig_bfgs_deformations}).
	This is due to the fact that there is no dependence on normal vectors like in the case of surface shape gradients, \begin{rrev}
		which are needed in Algorithm~\ref{Algo:Gradient_g1}.
	\end{rrev}
	In \cite{schulz2015Steklov,Welker}, the convergence of the gradient method for the surface and volume shape derivative formulation are investigated for the parabolic shape interface problem.
	It can be observed that the convergence with the representation of the shape gradient with respect to $g^1$ seems to require fewer iterations compared to the domain-based formulation.
	Yet, the domain-based form is computationally more attractive since it also works for much coarser discretizations.
	This can be seen in a comparision the two meshes in Figure \ref{fig_bfgs_deformations}. In particular, the mesh in the left picture in Figure \ref{fig_bfgs_deformations} shows the necessary fineness of the mesh for the surface gradient (Algorithm~\ref{Algo:Gradient_g1}) to lead to a reasonable convergence.
	However, the coarse grid in Figure \ref{fig_bfgs_deformations} works only for the domain-based formulation (Algorithm~\ref{Algo:Gradient}).
	
	\begin{figure}
		\begin{center}
			\begin{tabular}{cc}
				\includegraphics[width=0.4\textwidth]{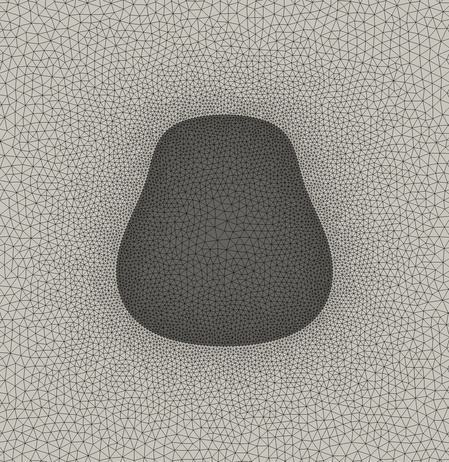}&
				\includegraphics[width=0.41\textwidth]{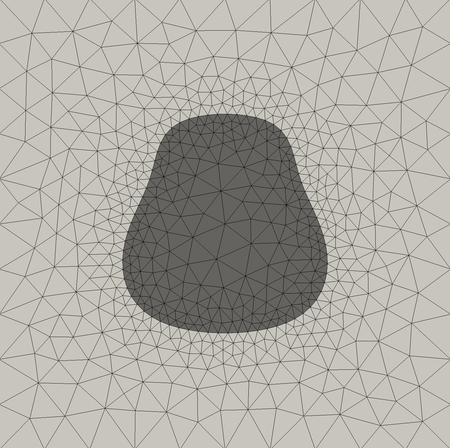}
			\end{tabular}
		\end{center}
		\caption{Different initial meshes}
		\label{fig_bfgs_deformations}
	\end{figure}

	\kathrin{We can conclude that} Algorithm~\ref{Algo:Gradient}  is very attractive from a computational point of view.
	However, the shape space $B_e$ containing smooth shapes unnecessarily limits the application of this algorithm.  
	More precisely, numerical investigations have shown that the optimization techniques also work on shapes with kinks in the boundary (cf.~\cite{SchulzSiebenborn,schulz2015Steklov,SiebenbornWelker_skin}). 
	This means that Algorithm~\ref{Algo:Gradient} is not limited to elements of $B_e$ and another shape space definition is required. Thus, in \cite{schulz2015Steklov}, the definition of smooth shapes is extended to so-called \emph. In the next section, it is clarified what we mean by $H^{1/2}$-shapes.  However, only a first try of a definition is given in \cite{schulz2015Steklov}. From a theoretical point of view there are several open questions about this shape space. The most important question is how the structure of this shape space is. If we do not know the structure, there is no chance to get control over the space. Moreover, the definition of this shape space has to be adapted and refined. The next section is concerned with the \begin{rrev}novel\end{rrev} space of $H^{1/2}$-shapes and in particular with its structure.
\end{rev}


\section{The shape space $\mathcal{B}^{\mathbf{1/2}}$}
\label{section_shapespace}

The Steklov-Poincar\'{e} metric correlates shape gradients with $H^1$-deformations. Under special assumptions, these deformations give shapes of class $H^{1/2}$, which are defined below. As already mentioned above the shape space $B_e$ unnecessarily limits the application of the methods mentioned in the previous section.
\kathrin{Thus, this section aims at a generalization of smooth shapes to shapes which arise naturally in shape optimization problems.}
In the setting of $B_e$, shapes can be considered as the images of embeddings.
From now on we have to think of shapes as boundary contours of deforming objects. Therefore, we need another shape space. 
In this section, we define the space of $H^{1/2}$-shapes and clarify its structure as a diffeological one. 

First, we do not only define diffeologies and related objects, but also explain the difference between \kathrin{diffeological spaces} and manifolds (Subsection~\ref{subsection_definitions_diffspace_B12}). 
\kathrin{In particular, we formulate the second main theorem of this paper, Theorem~\ref{theorem_diffman}.} 
Afterwards, the space of $H^{1/2}$-shapes is defined (Subsection~\ref{subsection_shapespaceB12}). \kathrin{In the third main theorem, Theorem~\ref{Theorem:DiffStructure},}  we see that it is a diffeological space.

\subsection{A brief introduction into diffeological spaces}
\label{subsection_definitions_diffspace_B12}

In this subsection, we define diffeologies and related objects. Moreover, we clarify the difference between manifolds and diffeological spaces. For a detailed introduction into diffeological spaces we refer to \cite{Iglesias}.

\subsubsection{Definitions}
\label{subsection_Def}

We start with the definition of a diffeological space and related objects like a \emph{diffeology}, with which a diffeological space is equipped, and \emph{plots}, which are the elements of a diffeology. Afterwards, we consider \emph{subset} and \emph{quotient diffeologies}. These two objects are required in the main theorem of Subsection \ref{subsection_shapespaceB12}. \kathrin{The definitions and theorems in this Subsection~\ref{subsection_Def} are summarized from \cite{Iglesias}.}

\begin{definition}[Parametrization, diffeology, diffeological space, plots]
	\label{Def:DiffSpace}
	Let $Y$ be a non-empty set. A parametrization in $Y$ is a map $U\to Y$, where $U$ is an open subset of $\mathbb{R}^n$.
	A diffeology on $Y$ is any set $D_Y$ of parametrizations in $Y$ such that the following three axioms are satisfied:
	\begin{itemize}
		\item[(i)] \textbf{Covering:} Any constant parametrization $\mathbb{R}^n\to Y$ is in $D_Y$.
		\item[(ii)] \textbf{Locality:} \kathrin{Let $P$ be a parametrization in $Y$, where $\text{dom}(P)$ denotes the domain of $P$. If, for all $r\in\text{dom}(P)$, there is an open neighborhood $V$ of $r$ such that the restriction $P\,\rule[-2mm]{.1mm}{4mm}_{\, V}\in D_Y$ , then  $P \in D_Y$.}
		\item[(iii)] \textbf{Smooth compatibility:} Let $p\colon O\to Y$ be an element of $D_Y$, where $O$ denotes an open subset of $\mathbb{R}^n$. Moreover, let $q\colon O'\to O$ be a smooth map in the usual sense, where $O'$ denotes an open subset of $\mathbb{R}^m$. Then $p\circ q\in D_Y$ holds.
	\end{itemize}
	A non-empty set $Y$ together with a diffeology $D_Y$ on $Y$ is called a diffeological space and denoted by $(Y,D_Y)$. The parametrizations $p\in D_Y$ are called plots of the diffeology $D_Y$. If a plot $p\in D_Y$ is defined on $O\subset \mathbb{R}^n$, then $n$ is called the dimension of the plot and $p$ is called $n$-plot.
\end{definition}

\noindent
In the literature, there are a lot of examples of diffeologies, e.g., the diffeology of the circle, the square, the set of smooth maps, etc. For those we refer to \cite{Iglesias}.

\begin{remark}
	A diffeology as a structure and a diffeological space as a set equipped with a diffeology are distinguished only formally. Every diffeology on a set contains the underlying set as the set of non-empty 0-plots (cf.~\cite{Iglesias}).
\end{remark}

Next, we want to connect diffeological spaces. This is possible though \emph{smooth maps} between two diffeological spaces. 

\begin{definition}[Smooth map between diffeological spaces, diffeomorphism]
	Let $(X,D_X),(Y,D_Y)$ be two diffeological spaces. A map $f\colon X\to Y$ is smooth if for each plot $p\in D_X$, $f\circ p$ is a plot of $ D_Y $, i.e., $f\circ D_X\subset D_Y$. If $f$ is bijective and if both, $f$ and its inverse $f^{-1} $, are smooth, $f$ is called a diffeomorphism. In this case, \kathrin{$(X,D_X)$} is called diffeomorphic to \kathrin{$(Y,D_Y)$}.
\end{definition}

The stability of diffeologies under almost all set constructions is one of the most striking properties of the class of diffeological spaces \kathrin{like in} the subset, quotient, functional or powerset diffeology. In the following, we concentrate on the subset and quotient diffeology. The concept of these are required in the proof of the main theorem in the next subsection. 

\paragraph{Subset diffeology.}
Every subset of a diffeological space carries a natural \emph{subset diffeology}, which is defined by the \emph{pullback} of the ambient diffeology by the \emph{natural inclusion}. 

Before we can construct the subset diffeology, we have to clarify the natural inclusion and the pullback. For two sets $A,B$ with $A\subset B$, the \emph{(natural) inclusion} is given by
$\iota_A\colon A\to B$, $x\mapsto x$. The pullback is defined as follows:

\begin{defthe}[Pullback]
	Let $X$ be a set and $(Y,D_Y)$ be a diffeological space. Moreover, $f\colon X\to Y$ denotes some map.
	\begin{itemize}
		\item[(i)] There exists a coarsest diffeology of $X$ such that $f$ is smooth. This diffeology is called the pullback of the diffeology $D_Y$ by $f$ and is denoted by $f^\ast(D_Y)$.
		\item[(ii)] Let $p$ be a parametrization in $X$. Then $p\in f^\ast(D_Y)$ if and only if $f\circ p\in D_Y$. 
	\end{itemize}
\end{defthe}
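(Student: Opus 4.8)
The plan is to produce the pullback diffeology by an explicit construction, after which part (ii) is a tautology and part (i) follows from a short universal-property argument. Concretely, define
\[
\mathcal{D} := \{\, p\colon U\to X \;:\; U\subseteq\mathbb{R}^n \text{ open},\ n\in\mathbb{N},\ f\circ p\in D_Y \,\}.
\]
First I would check that $\mathcal{D}$ is a diffeology on $X$, each axiom reducing to the corresponding axiom for $D_Y$ applied to the composite $f\circ p$: for \textbf{covering}, the image under $f$ of a constant parametrization $\mathbb{R}^n\to X$ is constant, hence lies in $D_Y$; for \textbf{locality}, if a family $\{p_i\colon O_i\to X\}_{i\in I}\subset\mathcal{D}$ extends to $p\colon\bigcup_i O_i\to X$, then $f\circ p$ restricts to $f\circ p_i$ on each $O_i$, so $\{f\circ p_i\}_{i\in I}\subset D_Y$ extends to $f\circ p$ and locality of $D_Y$ gives $f\circ p\in D_Y$, i.e.\ $p\in\mathcal{D}$; for \textbf{smooth compatibility}, $f\circ(p\circ q)=(f\circ p)\circ q\in D_Y$ whenever $f\circ p\in D_Y$ and $q$ is an ordinary smooth map. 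By the very definition of $\mathcal{D}$, the map $f\colon(X,\mathcal{D})\to(Y,D_Y)$ is smooth.

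Next I would establish the universal property. If $D$ is any diffeology on $X$ for which $f$ is smooth, then for every $p\in D$ we have $f\circ p\in D_Y$, hence $p\in\mathcal{D}$; thus $D\subseteq\mathcal{D}$. Since $\mathcal{D}$ itself renders $f$ smooth, $\mathcal{D}$ is the largest — equivalently the coarsest — diffeology on $X$ making $f$ smooth. We therefore set $f^\ast(D_Y):=\mathcal{D}$, which proves (i), and part (ii) is then exactly the defining membership condition of $\mathcal{D}$.

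I do not expect a genuine obstacle here; the points that need care rather than cleverness are twofold. One is the orientation of the refinement order: ``coarsest'' must be read as ``largest set of plots'', so the sought diffeology is the maximal, not the minimal, element of the family of admissible diffeologies. The other is that one cannot shortcut (i) by taking the union of all diffeologies making $f$ smooth, because an arbitrary union of diffeologies need not satisfy the locality axiom — this is precisely why the explicit description in (ii) must serve as the construction rather than as a mere corollary. Transporting the gluing data of the $p_i$ through $f$ in the verification of locality for $\mathcal{D}$ is the most delicate bookkeeping step, but it remains routine.
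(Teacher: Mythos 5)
Your proof is correct and is essentially the standard argument: the paper itself does not prove this statement but refers to Iglesias-Zemmour (Chapter~1, 1.26), where the pullback is constructed exactly as you do, by taking $\mathcal{D}=\{p : f\circ p\in D_Y\}$, verifying the three axioms, and observing that any diffeology making $f$ smooth is contained in $\mathcal{D}$. Your remarks on the orientation of ``coarsest'' (most plots) and on why a union of diffeologies cannot replace the explicit construction are accurate and supply exactly the details the paper leaves to the reference.
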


\emph{Proof.}
See \cite[Chapter~1, 1.26]{Iglesias}.\qed

\smallskip

The construction of subset diffeologies is related to so-called \emph{inductions}. 

\begin{definition}[Induction]
	Let $(X,D_X),(Y,D_Y)$ be diffeological spaces. A map $f\colon X\to Y$ is called induction if $f$ is injective and $f^\ast(D_Y)=D_X$, where $f^\ast(D_Y)$ denotes the pullback of the diffeology $D_Y$ by $f$.
\end{definition}

\noindent
The illustration of an induction as well as the criterions for being an induction can be found in \cite[Chapter~1, 1.31]{Iglesias}. 

Now, we are able to define the subset diffeology (cf.~\cite{Iglesias}).

\begin{defthe}[Subset diffeology]
	\label{def_subsetdiff}
	Let $(X,D_X)$ be a diffeological space and let $A\subset X$ be a subset. 
	Then $A$ carries a unique diffeology $D_A$, called the subset or induced diffeology, such that the inclusion map $\iota_A\colon A\to X$ becomes an induction, namely, $D_A=\iota_A^\ast(D_X)$.
	We call $(A,D_A)$ 
	the diffeological subspace of \kathrin{$(X,D_X)$}.
\end{defthe}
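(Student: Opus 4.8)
The plan is to derive the statement directly from the Pullback theorem (Theorem and Definition, ``Pullback'') together with the definition of an induction; no genuine computation is needed. First I would simply \emph{define} $D_A := \iota_A^\ast(D_X)$ and invoke the Pullback theorem, applied with the roles ``$X$'' $\rightsquigarrow A$, ``$Y$'' $\rightsquigarrow X$, ``$f$'' $\rightsquigarrow \iota_A$. This immediately gives that $D_A$ is a \emph{diffeology} on $A$ — indeed the coarsest one making $\iota_A$ smooth — and, by part (ii) of that theorem, that its plots are characterized by $p\in D_A \iff \iota_A\circ p\in D_X$. So the existence of \emph{some} diffeology on $A$ with the desired property is reduced to checking that this particular $D_A$ turns $\iota_A$ into an induction.

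Next I would verify the two defining conditions of an induction for $\iota_A\colon (A,D_A)\to (X,D_X)$. Injectivity is trivial, since $\iota_A(x)=x$ for all $x\in A$, hence $\iota_A(x)=\iota_A(x') \Rightarrow x=x'$. The second condition, $\iota_A^\ast(D_X)=D_A$, holds by construction, since that equality is precisely how $D_A$ was defined. Therefore $\iota_A$ is an induction, and $(A,D_A)$ is a diffeological subspace of $X$.

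Finally I would establish uniqueness, which is where the definition of induction does the real work. Suppose $D_A'$ is \emph{any} diffeology on $A$ for which $\iota_A\colon (A,D_A')\to (X,D_X)$ is an induction. By the very definition of an induction this forces $\iota_A^\ast(D_X)=D_A'$, and hence $D_A'=\iota_A^\ast(D_X)=D_A$. So the diffeology with the required property is unique, and it coincides with the pullback $\iota_A^\ast(D_X)$.

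I do not expect any substantial obstacle: the only point that deserves a word of care is confirming that the Pullback theorem genuinely applies in this slightly degenerate situation, where the source is merely a subset of the target diffeological space and the map is the set-theoretic inclusion rather than an abstract map — but the theorem is stated for an arbitrary map out of an arbitrary set, so this is immediate. Once that is noted, the whole argument is a two-line unwinding of the cited results, and the statement is in fact a corollary rather than an independent theorem.
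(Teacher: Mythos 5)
Your argument is correct and is essentially the same as the paper's (which simply defers to Iglesias--Zemmour): one defines $D_A:=\iota_A^\ast(D_X)$ via the Pullback theorem, notes that injectivity of $\iota_A$ plus the defining equality make it an induction, and observes that uniqueness is forced because the induction condition itself prescribes $D_A'=\iota_A^\ast(D_X)$. Nothing is missing; the statement is indeed an immediate corollary of the pullback construction.
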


\paragraph{Quotient diffeology.}

Like every subset of a diffeological space inherits the subset diffeology, every quotient of a diffeological space carries a natural \emph{quotient diffeology} defined by the \emph{pushforward} of the diffeology of the source space to the quotient by the \emph{canonical projection}.

First, we have to clarify the canonical projection. 
For a set $A$ and an equivalence relation $\sim$ on $A$, the \emph{canonical projection} is defined as $\pi\colon X\to X/\hspace{-1mm}\sim$, $x\mapsto [x]$, where $[x]:= \{x'\in X\colon x\sim x'\}$ denotes the equivalence class of $x$ with respect to $\sim$.
Moreover, the pushforward has to be defined:

\begin{defthe}[Pushforward]
	\label{pushforward}
	Let $(X,D_X)$ be a diffeological space and $Y$ be a set. Moreover, $f\colon X\to Y$ denotes a map.
	\begin{itemize}
		\item[(i)] There exists a finest diffeology of $Y$ such that $f$ is smooth. This diffeology is called the pushforward of the diffeology $D_X$ by $f$ and is denoted by $f_\ast(D_X)$.
		\item[(ii)] A parametrization $p\colon U\to Y$ lies in $f_\ast(D_X)$ if and only if every point $x\in U$ has an open neighbourhood $V\subset U$ such that $p\,\rule[-2mm]{.1mm}{4mm}_{\, V}\colon V\to Y$ is constant or of the form $p\,\rule[-2mm]{.1mm}{4mm}_{\, V}=f\circ q$ for some plot $q\colon V\to X$ with $q\in D_X$.
	\end{itemize}
\end{defthe}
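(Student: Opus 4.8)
The plan is to prove (i) and (ii) in one stroke by writing down the finest diffeology on $Y$ explicitly and checking it has the properties claimed. Let $\mathcal{P}$ be the set of all parametrizations $p\colon U\to Y$ with the property stated in (ii): every $x\in U$ has an open neighbourhood $V\subseteq U$ on which $\left.p\right|_V$ is constant or of the form $f\circ q$ for some plot $q\in D_X$ defined on $V$. I would show that $\mathcal{P}$ is a diffeology on $Y$, that $f$ is smooth with respect to it, and that $\mathcal{P}$ is contained in every diffeology on $Y$ for which $f$ is smooth. Together these statements say that $\mathcal{P}$ is the finest diffeology making $f$ smooth, which is assertion (i) with $f_\ast(D_X):=\mathcal{P}$, and then (ii) holds by construction.

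First I would check the three diffeology axioms for $\mathcal{P}$. Covering is immediate, since a constant parametrization is locally constant at each point. For locality, if a family $\{p_i\colon O_i\to Y\}_{i\in I}\subseteq\mathcal{P}$ glues to a parametrization $p$ on $\bigcup_{i}O_i$, then any point of the domain lies in some $O_i$, and an open neighbourhood witnessing $p_i\in\mathcal{P}$ there also witnesses $p\in\mathcal{P}$; hence $p\in\mathcal{P}$. For smooth compatibility, let $p\colon O\to Y$ lie in $\mathcal{P}$ and let $g\colon O'\to O$ be smooth in the ordinary sense. Given $x'\in O'$, choose an open $V\subseteq O$ around $g(x')$ witnessing $p\in\mathcal{P}$; by continuity $g^{-1}(V)$ is an open neighbourhood of $x'$, and on it $p\circ g$ is either constant (if $\left.p\right|_V$ is constant) or equals $f\circ\left(q\circ\left.g\right|_{g^{-1}(V)}\right)$, where $q\circ\left.g\right|_{g^{-1}(V)}\in D_X$ by the smooth-compatibility axiom for $D_X$; thus $p\circ g\in\mathcal{P}$. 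That $f$ is smooth for $\mathcal{P}$ is then clear: for any $q\in D_X$, the parametrization $f\circ q$ is globally of the form $f\circ q$, hence lies in $\mathcal{P}$ (taking $V$ equal to the whole domain at every point).

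Next I would prove minimality. Let $D$ be any diffeology on $Y$ for which $f$ is smooth, and let $p\in\mathcal{P}$. Near each point of its domain, $p$ coincides with a constant parametrization — which lies in $D$ by the covering axiom for $D$ — or with $f\circ q$ for some $q\in D_X$ — which lies in $D$ because $f$ is smooth with respect to $D$. So $p$ is locally in $D$, and the locality axiom for $D$ gives $p\in D$. Hence $\mathcal{P}\subseteq D$. Since the coarsest diffeology on $Y$ (consisting of all parametrizations in $Y$) does make $f$ smooth, the family of competitors is non-empty, and $\mathcal{P}$ is contained in every one of them while being a member of that family itself; therefore $\mathcal{P}$ is their minimum, i.e.\ the finest diffeology making $f$ smooth. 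This establishes (i), and (ii) is the defining description of $\mathcal{P}=f_\ast(D_X)$.

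I do not expect any genuine obstacle: the argument is purely formal. The one point requiring a little care — that restricting a plot $q\in D_X$ to an open subset again yields a plot, which is used both when invoking locality and when checking $f\circ q\in\mathcal{P}$ — is itself an instance of the smooth-compatibility axiom applied to the inclusion of an open set. The other thing to watch is the direction of the inclusions: in the minimality step one shows $\mathcal{P}$ sits \emph{inside} every competitor, whereas the axiom checks show $\mathcal{P}$ is \emph{large enough} to be a diffeology and to make $f$ smooth; since ``finest'' here means smallest, these two facts pin $\mathcal{P}$ down uniquely.
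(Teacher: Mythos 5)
Your proof is correct, and since the paper itself gives no argument here --- it simply cites \cite[Chapter~1, 1.43]{Iglesias} --- your write-up is a faithful, self-contained version of the standard construction from that reference: exhibit the candidate diffeology $\mathcal{P}$ explicitly, verify the covering, locality and smooth-compatibility axioms, check that $f$ is smooth for $\mathcal{P}$, and show $\mathcal{P}$ is contained in every diffeology making $f$ smooth. The two points of care you flag (restriction of plots to open subsets via smooth compatibility, and the fact that ``finest'' means smallest) are exactly the right ones, and your handling of the constant clause --- needed because $f$ need not be surjective --- is correct.
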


\emph{Proof.}
See \cite[Chapter~1, 1.43]{Iglesias}.\qed

\begin{remark}
	If a map $f$ from a diffeological space $(X,D_X)$ into a set $Y$ is surjective, then $f_\ast(D_X)$ consists precisely of the plots $p\colon U\to Y$ which locally are of the form $f\circ q$ for plots $q\in D_X$ since those already contain the constant parametrizations.
\end{remark}

The construction of quotient diffeologies is related to so-called \emph{subductions}.

\begin{definition}[Subduction]
	Let $(X,D_X),(Y,D_Y)$ be diffeological spaces. A map $f\colon X\to Y$ is called subduction if $f$ is surjective and $f_\ast(D_X)=D_Y$, where $f_\ast(D_X)$ denotes the pushforward of the diffeology $D_X$ by $f$.
\end{definition}

\noindent 
The illustration of a subduction as well as the criterions for being a subduction can be found in \cite[Chapter~1, 1.48]{Iglesias}. 

Now, we can define the quotient diffeology (cf.~\cite{Iglesias}).

\begin{defthe}[Quotient diffeology]
	\label{def_quotdiff}
	Let $(X,D_X)$ be a diffeological space and $\sim$ be an equivalence relation on $X$. 
	Then the quotient set $X/\hspace{-1mm}\sim$ carries a unique diffeologcial sturcture $D_{X/\sim} $, called the quotient diffeology, such that the canonical projection $\pi\colon X\to X/\hspace{-1mm}\sim$ becomes a subduction, namely, $D_{X/\sim}=\pi_\ast(D_X)$. 
	We call $\left(X/\hspace{-1mm}\sim,D_{X/\sim}\right)$ the diffeological quotient of \kathrin{$(X,D_X)$} by the relation $\sim$.
\end{defthe}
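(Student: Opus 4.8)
The plan is to read off $D_{X/\sim}$ directly from the Pushforward Theorem and Definition~\ref{pushforward}, applied to the canonical projection, and then to unwind the definition of subduction so that both existence and uniqueness come out almost formally. Concretely, I would take $f := \pi\colon X\to X/\hspace{-1mm}\sim$, $x\mapsto[x]$, and invoke part~(i) of Theorem and Definition~\ref{pushforward}: there is a finest diffeology on $X/\hspace{-1mm}\sim$ making $\pi$ smooth, namely the pushforward $\pi_\ast(D_X)$. Set $D_{X/\sim}:=\pi_\ast(D_X)$; by that same result $(X/\hspace{-1mm}\sim,D_{X/\sim})$ is a diffeological space and $\pi$ is smooth with respect to it.

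For the existence part of the claim I would then check that $\pi$ is a subduction in the sense of the definition stated just above: it must be surjective, which is immediate since every class is $[x]$ for some $x\in X$, and it must satisfy $\pi_\ast(D_X)=D_{X/\sim}$, which holds by construction. Because $\pi$ is surjective, the local description in Theorem and Definition~\ref{pushforward}(ii) simplifies, as noted in the Remark following it, to: a parametrization $p\colon U\to X/\hspace{-1mm}\sim$ lies in $D_{X/\sim}$ precisely when every point of $U$ has a neighbourhood on which $p$ has the form $\pi\circ q$ for some plot $q\in D_X$. This gives a workable description of the quotient diffeology and confirms that $\pi$ is a subduction.

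For uniqueness, suppose $D$ is any diffeology on $X/\hspace{-1mm}\sim$ for which $\pi\colon(X,D_X)\to(X/\hspace{-1mm}\sim,D)$ is a subduction. By the very definition of subduction this means $\pi_\ast(D_X)=D$, hence $D=D_{X/\sim}$. So the diffeology is pinned down uniquely, and it is legitimate to speak of \emph{the} diffeological quotient $(X/\hspace{-1mm}\sim,D_{X/\sim})$ of $X$ by $\sim$.

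I do not expect a genuine obstacle here: the whole content is already carried by Theorem and Definition~\ref{pushforward}, whose proof is cited from \cite[Chapter~1, 1.43]{Iglesias}, and the only extra observation needed is the surjectivity of the canonical projection, which is what upgrades "finest diffeology making $\pi$ smooth" to "$\pi$ is a subduction" and makes the plot description clean. If one wanted a proof independent of~\ref{pushforward}, the work would instead be to verify the three diffeology axioms for the set of parametrizations into $X/\hspace{-1mm}\sim$ that are locally of the form $\pi\circ q$ with $q\in D_X$: Covering follows from constant lifts, Locality is essentially built into the "locally liftable" description, and Smooth compatibility follows because precomposing a lift $q$ with a smooth map keeps it in $D_X$ by axiom~(iii) for $D_X$ — again all routine.
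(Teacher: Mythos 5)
Your proposal is correct and matches the paper's (implicit) argument: the paper states the result with the formula $D_{X/\sim}=\pi_\ast(D_X)$ and defers to Iglesias-Zemmour, and the content is exactly what you spell out --- existence from the Pushforward Theorem and Definition together with the surjectivity of $\pi$, and uniqueness read off directly from the definition of subduction. Nothing is missing.
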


\kathrin{
	One aim of this paper is to go towards optimization algorithms in diffeological spaces.
	Thus, we end this subsection with a brief discussion about the topology of a diffeological space which is necessary to discuss properties of optimization methods in diffeological spaces like convergence. \begin{rrev}
		Every diffeological space induces a unique topology, the so-called \emph{$D$-topology}, which is a natural topology and introduced by Patrick Iglesias-Zemmour for each diffeological space (cf.~\cite{Iglesias}). 
	\end{rrev} 
	In particular, openess, compactness and convergence depend on the $D$-topology.
	Given a diffeological space $(X,D_X)$, 
	the $D$-topology is the finest topology such that all plots are continuous. That is, a subset $U$ of $X$ is open (in the D-topology) if for any plot $p\colon O\to X$ the pre-image $p^{-1}U\subset O$ is open.
	For more information about the $D$-topology we refer to the literature, e.g., \cite[Chapter~2, 2.8]{Iglesias} or \cite{Christensen}.
	However, if $(X,D_X)$ is a diffeological space and one knows that a sequence $\{x_n\}$ converges with respect to the topology of $X$, it is not guaranteed that $\{x_n\}$ converges also for the $D$-topology because this topology is finer than the given one on $X$. Thus, all discussions about compactness, convergence, etc in the diffeological sense reduces to the $D$-topology.
}

\subsubsection{Differences between \kathrin{diffeological spaces} and manifolds}

Manifolds can be generalized in many ways. In \cite{Stacey}, a summary and comparison of possibilities to generalize smooth manifolds are given. 
One generalization is a diffeological space on which we concentrate in this section.
In the following, the main differences between manifolds and diffeological spaces are figured out \kathrin{and formulated in  \begin{rrev}Theorem~\ref{theorem_diffman}\end{rrev}}. 
For simplicity, we concentrate on finite-dimensional manifolds. However, it has to be mentioned that infinite-dimensional manifolds can also be understood as diffeological spaces. This follows, e.g., from \cite[Corollary~3.14]{KrieglMichor} or \cite{Losik}.

Given a smooth manifold there is a natural diffeology on this manifold consisting of all parametrizations which are smooth in the classical sense.
This yields the following definition.

\begin{definition}[Diffeological space associated with a manifold]
	Let $M$ be a finite-dimensional (not necessarily Hausdorff or paracompact) smooth manifold. The diffeological space associated with $M$ is defined as $(M,D_M)$, where the diffeology $D_M$ consists precisely of the parametrizations of $M$ which are smooth in the classical sense.
\end{definition}

\begin{remark}
	\label{remark_diff}
	If $M,N$ denote finite-dimensional manifolds, then $f\colon M\to N$ is smooth in the classical sense if and only if it is a smooth map between the associated diffeological spaces $(M,D_M)\to(N,D_N)$.
\end{remark}

In order to characterize the diffeological spaces which arise from manifolds, we need the concept of \emph{smooth points}.

\begin{definition}[Smooth point]
	\label{definiton_smoothpoints}
	Let $(X,D_X)$ be a diffeological space. A point $x\in X$ is called smooth if there exists a subset $U\subset X$  \kathrin{which is open with respect to the topology of $X$ and} contains $x$ such that \kathrin{$(U,D_U)$} is diffeomorphic to an open subset of $\mathbb{R}^n$, \kathrin{where $D_U$ denotes the subset diffeology}.
\end{definition}

\noindent
The concept of smooth points is quite simple. Let us consider the coordinate axes, e.g., in $\mathbb{R}^2$. All points of the two axis with exception of the origin are smooth points.

Now, we are able to formulate the following \kathrin{main} theorem: 

\begin{theorem}
	\label{theorem_diffman}
	A diffeological space $(X,D_X)$ is associated with a (not necessarily paracompact or Hausdorff) smooth manifold if and only if each of its points is smooth.
\end{theorem}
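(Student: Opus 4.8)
The plan is to prove the two implications separately. For the forward direction, suppose $(X,D_X)$ is associated with a smooth manifold $M$; I would argue that every point is smooth essentially by definition of a manifold: around any $x\in M$ there is a chart $\varphi\colon U\to \varphi(U)\subset\mathbb{R}^n$ with $U$ an open neighbourhood of $x$, and by Remark~\ref{remark_diff} the chart $\varphi$, being a classical diffeomorphism between manifolds, is a diffeomorphism of the associated diffeological spaces $(U,D_U)\to(\varphi(U),D_{\varphi(U)})$, where $D_U$ is the subset diffeology inherited from $D_M$. Hence $x$ is a smooth point in the sense of Definition~\ref{definiton_smoothpoints}. (One small point to check is that the subset diffeology on an open set $U\subset M$ coming from $D_M$ coincides with the manifold diffeology of $U$ as a manifold in its own right; this is routine from the characterization of the pullback diffeology in Theorem and Definition following Definition~\ref{def_subsetdiff}.)

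For the converse, suppose every point of $(X,D_X)$ is smooth. The idea is to manufacture a smooth atlas on the underlying set $X$. For each $x\in X$ choose, by hypothesis, an open $U_x\subset X$ with $x\in U_x$ and a diffeomorphism $\varphi_x\colon U_x\to V_x$ onto an open subset $V_x\subset\mathbb{R}^{n(x)}$. First I would note that the dimension $n(x)$ is locally constant, hence constant on each connected component, so on a fixed component we get charts into a fixed $\mathbb{R}^n$; this lets us call $X$ (componentwise) an $n$-manifold candidate. The collection $\{(U_x,\varphi_x)\}_{x\in X}$ covers $X$, and the transition maps $\varphi_y\circ\varphi_x^{-1}$, defined on $\varphi_x(U_x\cap U_y)$, are smooth in the classical sense because they are compositions of diffeomorphisms of diffeological spaces between (diffeological spaces associated with) open subsets of Euclidean space, where by Remark~\ref{remark_diff} diffeological smoothness and classical smoothness agree. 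Thus $\{(U_x,\varphi_x)\}$ is a smooth atlas, endowing $X$ with the structure of a smooth manifold $M$ — not necessarily Hausdorff or paracompact, exactly as allowed in the statement, since we impose no separation or countability hypothesis on the diffeological topology of $X$.

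It then remains to check that the diffeology $D_M$ associated with this manifold structure equals the original $D_X$. One inclusion, $D_M\subseteq D_X$: a parametrization $p\colon O\to X$ that is classically smooth is locally, in each chart, of the form $\varphi_x^{-1}\circ(\text{smooth Euclidean map})$, and since $\varphi_x^{-1}$ is a diffeomorphism $(V_x,D_{V_x})\to(U_x,D_{U_x})$ and $D_{U_x}$ is the subset diffeology of $D_X$, the locality axiom (ii) of a diffeology gives $p\in D_X$. The reverse inclusion $D_X\subseteq D_M$: given $p\colon O\to X$ in $D_X$, for any $t\in O$ the point $p(t)$ lies in some $U_x$; restricting $p$ to the open set $p^{-1}(U_x)$ and post-composing with $\varphi_x$ yields $\varphi_x\circ p$, which is a plot of $D_{V_x}$ (smoothness of $\varphi_x$ plus the subset-diffeology description), hence a classically smooth Euclidean map; so $p$ is classically smooth near $t$, and as $t$ was arbitrary, $p\in D_M$. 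This double inclusion finishes the proof.

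\textbf{Main obstacle.} I expect the delicate part to be the bookkeeping around subset diffeologies of open subsets of $X$ and their identification with Euclidean (sub)diffeologies — i.e. making sure that "diffeomorphic to an open subset of $\mathbb{R}^n$" in Definition~\ref{definiton_smoothpoints} interacts correctly with the locality axiom so that plots can be recognized chart-by-chart. The transition-map smoothness and the atlas construction are then formal, and the absence of Hausdorff/paracompactness assumptions is precisely what makes the statement clean; no patching of a partition of unity or second-countability argument is needed.
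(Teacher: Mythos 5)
Your proposal is correct and follows essentially the same route as the paper's proof: the forward direction via charts inducing diffeomorphisms of associated diffeological spaces, and the converse by assembling the smooth-point neighbourhoods into an atlas whose transition maps are classically smooth by Remark~\ref{remark_diff}, then checking the induced diffeology agrees with $D_X$. Your write-up is in fact somewhat more explicit than the paper's at the final step (the double inclusion $D_M = D_X$), which the paper dispatches with ``a similar argument as above.''
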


\emph{Proof.}
We have to show the following statements:
\begin{itemize}
	\item[(i)] Given a smooth manifold $M$, then each point of the associated diffeological space $(M,D_M)$ is smooth.
	\item[(ii)] Given a diffeological space $(X,D_X)$ for which all points are smooth, then it is associated with a smooth manifold $M$.
\end{itemize}

\textbf{To (i):}
Let $M$ be a smooth manifold and $x\in M$ an arbitrary point. Then there exists an open neighbourhood $U\subset M$ of $x$ which is diffeomorphic to an open subset $O\subset \mathbb{R}^n$. Let $f\colon U\to O$ be a diffeomorphism. This diffeomorphism is a diffeomphism of the associated diffeological spaces $(U,D_U)$ and $(O,D_O)$. Thus, $x\in M$ is a smooth point. Since $x\in M$ is an arbitrary point, each point of $(M,D_M)$ is smooth.

\textbf{To (ii):}
Let $(X,D_X)$ be a diffeological space for which all points are smooth. Then there exist an open cover $X=\bigcup_{i\in I} U_i$ and diffeomorphisms $f_i\colon U_i\to O_i $ onto open subsets $O_i\subset\mathbb{R}^{n}$. The map $f_j\circ f_i^{-1}\,\rule[-2mm]{.1mm}{4mm}_{\hspace{.6mm}f_i(U_i\cap U_j)}$ is smooth (in the diffeological sense) for all $i,j\in I$. Due to Remark~\ref{remark_diff}, the map $f_j\circ f_i^{-1}\,\rule[-2mm]{.1mm}{4mm}_{\hspace{.6mm}f_i(U_i\cap U_j)}$ is smooth in the classical sense for all $i,j\in I$. Thus, $\{(U_i,f_i)\}_{i\in I}$ defines a smooth atlas and a manifold structure on $X$ is defined. Let $D$ be the associated diffeology. 
A similar argument as above shows that the diffeology $D$ agrees with the original one $D_X$.
\qed

\smallskip

\noindent
This theorem clarifies the difference between manifolds and diffeological spaces. 
Roughly speaking, a manifold of dimension $n$ is getting by glueing together open subsets of $\mathbb{R}^n$ via diffeomorphisms. In contrast, a diffeological space is formed by glueing together open subsets of $\mathbb{R}^n$ with the difference that the glueing maps are not necessarily diffeomorphisms and that $n$ can vary.
However, note that manifolds deal with charts and diffeological spaces deal with plots.
A system of local coordinates, i.e., a diffeomorphism $p\colon U\to U'$ with $U\subset \mathbb{R}^n$ open and $U'\subset X$ open, can be viewed as a very special kind of plot $U\to X$ which induces an induction on the corresponding diffeological spaces.

\begin{remark}
	Note that we consider smooth manifolds which do not necessary have to be Hausdorff or paracompact. If we understand a manifold as Hausdorff and paracompact, then the diffeological space $(X,D_X)$ in Theorem~\ref{theorem_diffman} has to be Hausdorff and paracompact.
	In this case, we need the concept of open sets in diffeological spaces.
	Whether a set is open depends on the topology under consideration.
	In the case of diffeological spaces, openness depends on the $D$-topology.
\end{remark}

\subsection{The diffeological shape space}
\label{subsection_shapespaceB12}

We extend the definition of smooth shapes, which are elements of the shape space $B_e$, to \emph{shapes of class $H^{1/2}$}. In the following, it is clarified what we mean by $H^{1/2}$-shapes. 
We would like to recall that a shape in the sense of the shape space $B_e$ is given by the image of an embedding from the unit sphere $S^{d-1}$ into the Euclidean space $\mathbb{R}^d$. In view of our generalization, it has technical advantages to consider so-called \emph{Lipschitz shapes} which are defined as follows.

\begin{definition}[Lipschitz shape]
	A $(d-1)$-dimensional Lipschitz shape $\Gamma_0$ is defined as the boundary $\Gamma_0=\partial\mathcal{X}_0$ of a compact Lipschitz domain $\mathcal{X}_0\subset\mathbb{R}^d$ with $\mathcal{X}_0\neq\emptyset$. 
	The set $\mathcal{X}_0$ is called a Lipschitz set.
\end{definition}

\noindent
Example of Lipschitz shapes are illustrated in Figure \ref{figure_priorshapes}. In contrast, Figure \ref{figure_nonpriorshapes} shows examples of shapes which are non-Lipschitz shapes.

General shapes---in our novel terminology---arise from $H^1$-deformations of a Lipschitz set $\mathcal{X}_0$. These $H^1$-deformations, evaluated at a Lipschitz shape $\Gamma_0$, give deformed shapes $\Gamma$ if the deformations are injective and continuous. These shapes are called of class $H^{1/2}$ and proposed firstly in \cite{schulz2015Steklov}. The following definitions differ from \cite{schulz2015Steklov}. This is because of our aim to define the space of $H^{1/2}$-shapes as diffeological space which is suitable for the formulation of optimization techniques and its applications. 

\begin{definition}[Shape space $\mathcal{B}^{1/2}$]
	\label{definition_B12}
	Let $\Gamma_0\subset \mathbb{R}^d$ be a $(d-1)$-dimensional Lipschitz shape. The space of all $(d-1)$-dimensional $H^{1/2}$-shapes is given by
	\begin{equation}\label{shape_mainifold}
	{\cal B}^{1/2}(\Gamma_0,\mathbb{R}^d):=
	{\cal H}^{1/2}(\Gamma_0,\mathbb{R}^d)\big\slash \sim \ ,
	\end{equation}
	where 
	\begin{equation}
	\label{force-ball}
	\begin{split}
	& {\cal H}^{1/2}(\Gamma_0,\mathbb{R}^d)\\ &:=
	\{w\colon  w\in H^{1/2}(\Gamma_0, \mathbb{R}^d) \text{ injective, continuous; } w(\Gamma_0) \text{ Lipschitz shape} \}
	\end{split}
	\end{equation}
	and the equivalence relation $\sim$ is given by
	\begin{equation}
	\label{equiv_rel}
	w_1\sim w_2 \Leftrightarrow w_1(\Gamma_0)=w_2(\Gamma_0), \text{ where } w_1,w_2\in {\cal H}^{1/2}(\Gamma_0,\mathbb{R}^d).
	\end{equation}
\end{definition}

\begin{figure}
	\vspace*{-.8cm}
	\begin{minipage}{.3\textwidth}
		\vspace{1.3cm}
		\begin{center}
			\includegraphics[height=2.3cm]{./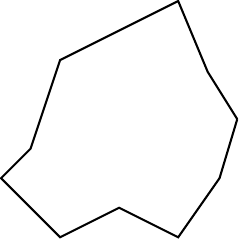}
		\end{center}
		\centering One-dimensional case.
	\end{minipage}
	\hspace{.3cm}
	\begin{minipage}{.8\textwidth}
		\includegraphics[height=3.2cm]{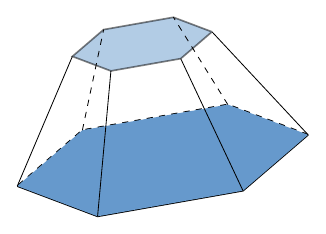}
		\hspace{-8mm}\includegraphics[height=3.9cm]{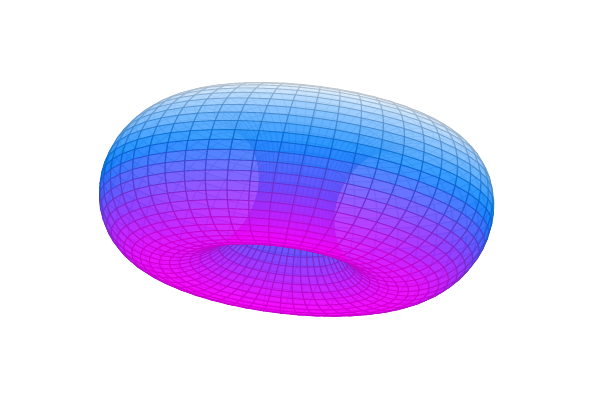}
		\begin{center}
			\vspace{-.2cm}
			Two-dimensional Lipschitz shapes.
		\end{center}
	\end{minipage}
	\caption{Examples of elements of $\mathcal{B}^{1/2}$ and, thus, Lipschitz shapes in the one- and two-dimensional case. The illustrated shapes are not elements of $B_e$.}
	\label{figure_priorshapes}
\end{figure}

The set ${\cal H}^{1/2}(\Gamma_0,\mathbb{R}^d)$ is obviously a subset of the Sobolev-Slobodeckij space $H^{1/2}(\Gamma_0,\mathbb{R}^d)$, which is well-known as a Banach space (cf.~\cite[Chapter~3]{Mclean}).
Banach spaces are manifolds and, thus, we can view $H^{1/2}(\Gamma_0,\mathbb{R}^d)$ with the corresponding diffeology.
This encourages the following theorem which provides the space of $H^{1/2}$-shapes with a diffeological structure. 
\begin{rrev}
	The next theorem is the third (and, thus, last) main theorem of this paper.
\end{rrev}

\begin{theorem}
	\label{Theorem:DiffStructure}
	The set ${\cal H}^{1/2}(\Gamma_0,\mathbb{R}^d)$ and the space ${\cal B}^{1/2}(\Gamma_0,\mathbb{R}^d)$ carry unique diffeologies such that the inclusion map $\iota_{{\cal H}^{1/2}(\Gamma_0,\mathbb{R}^d)}\colon {\cal H}^{1/2}(\Gamma_0,\mathbb{R}^d) \to H^{1/2}(\Gamma_0,\mathbb{R}^d)$ is an induction and such that the canonical projection $\pi\colon {\cal H}^{1/2}(\Gamma_0,\mathbb{R}^d) \to {\cal B}^{1/2}(\Gamma_0,\mathbb{R}^d)$ is a subduction.
	
\end{theorem}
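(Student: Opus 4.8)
The plan is to invoke the two general constructions from Subsection~\ref{subsection_definitions_diffspace_B12} directly, since the statement is essentially a specialization of Theorem and Definition~\ref{def_subsetdiff} (subset diffeology) and Theorem and Definition~\ref{def_quotdiff} (quotient diffeology). First I would observe that $H^{1/2}(\Gamma_0,\mathbb{R}^d)$ is a Banach space, hence a (possibly infinite-dimensional) manifold, and therefore carries its associated diffeology $D_{H^{1/2}}$ consisting of all parametrizations that are smooth in the classical sense; this is justified by the remark in the excerpt that infinite-dimensional manifolds can be regarded as diffeological spaces (via \cite[Corollary~3.14]{KrieglMichor}). This equips the ambient space with the structure needed to run both constructions.

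Next I would apply Theorem and Definition~\ref{def_subsetdiff} with $X = H^{1/2}(\Gamma_0,\mathbb{R}^d)$ and $A = {\cal H}^{1/2}(\Gamma_0,\mathbb{R}^d)$: since ${\cal H}^{1/2}(\Gamma_0,\mathbb{R}^d)$ is a subset of $H^{1/2}(\Gamma_0,\mathbb{R}^d)$ (as already noted in the text), it carries a unique diffeology $D_{{\cal H}^{1/2}} := \iota_{{\cal H}^{1/2}}^\ast(D_{H^{1/2}})$, the pullback of $D_{H^{1/2}}$ along the natural inclusion, and by that theorem this is precisely the unique diffeology making $\iota_{{\cal H}^{1/2}(\Gamma_0,\mathbb{R}^d)}$ an induction. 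Then I would apply Theorem and Definition~\ref{def_quotdiff} with $X = {\cal H}^{1/2}(\Gamma_0,\mathbb{R}^d)$ (now a diffeological space) and the equivalence relation $\sim$ from \eqref{equiv_rel}: the quotient set ${\cal B}^{1/2}(\Gamma_0,\mathbb{R}^d) = {\cal H}^{1/2}(\Gamma_0,\mathbb{R}^d)/\!\sim$ carries a unique diffeology $D_{{\cal B}^{1/2}} := \pi_\ast(D_{{\cal H}^{1/2}})$, the pushforward of $D_{{\cal H}^{1/2}}$ along the canonical projection $\pi$, and by that theorem this is the unique diffeology making $\pi$ a subduction. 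Stringing these together yields both uniqueness and existence claims.

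The only genuine content beyond bookkeeping is the first step — checking that $H^{1/2}(\Gamma_0,\mathbb{R}^d)$ legitimately counts as a diffeological space so that the two constructions apply. I expect this to be the main (and only) obstacle, and it is resolved by citing that a Banach space is a smooth manifold (modelled on itself) and invoking the fact recalled in the excerpt that every (infinite-dimensional) manifold has an associated diffeology. Once the ambient object is a diffeological space, the subset and quotient constructions are purely formal: each supplies a uniquely determined diffeology with exactly the universal property (induction, respectively subduction) asserted in the theorem, so no further verification is required.

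\begin{proof}
The Sobolev--Slobodeckij space $H^{1/2}(\Gamma_0,\mathbb{R}^d)$ is a Banach space (cf.~\cite[Chapter~3]{Mclean}), hence a smooth manifold modelled on itself, and therefore carries its associated diffeology $D_{H^{1/2}}$ consisting of all parametrizations which are smooth in the classical sense; that an infinite-dimensional manifold may be viewed as a diffeological space in this way follows, e.g., from \cite[Corollary~3.14]{KrieglMichor}. Thus $\left(H^{1/2}(\Gamma_0,\mathbb{R}^d),D_{H^{1/2}}\right)$ is a diffeological space.

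Since ${\cal H}^{1/2}(\Gamma_0,\mathbb{R}^d)\subset H^{1/2}(\Gamma_0,\mathbb{R}^d)$, Theorem and Definition~\ref{def_subsetdiff} applies: the set ${\cal H}^{1/2}(\Gamma_0,\mathbb{R}^d)$ carries a unique diffeology
$$D_{{\cal H}^{1/2}} := \iota_{{\cal H}^{1/2}(\Gamma_0,\mathbb{R}^d)}^\ast(D_{H^{1/2}}),$$
the subset diffeology, and this is precisely the unique diffeology for which the inclusion map $\iota_{{\cal H}^{1/2}(\Gamma_0,\mathbb{R}^d)}\colon {\cal H}^{1/2}(\Gamma_0,\mathbb{R}^d)\to H^{1/2}(\Gamma_0,\mathbb{R}^d)$ is an induction.

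Now $\left({\cal H}^{1/2}(\Gamma_0,\mathbb{R}^d),D_{{\cal H}^{1/2}}\right)$ is a diffeological space, and $\sim$ from \eqref{equiv_rel} is an equivalence relation on it. By Theorem and Definition~\ref{def_quotdiff}, the quotient set ${\cal B}^{1/2}(\Gamma_0,\mathbb{R}^d)={\cal H}^{1/2}(\Gamma_0,\mathbb{R}^d)\big\slash\sim$ carries a unique diffeology
$$D_{{\cal B}^{1/2}} := \pi_\ast(D_{{\cal H}^{1/2}}),$$
the quotient diffeology, and this is precisely the unique diffeology for which the canonical projection $\pi\colon {\cal H}^{1/2}(\Gamma_0,\mathbb{R}^d)\to {\cal B}^{1/2}(\Gamma_0,\mathbb{R}^d)$ is a subduction. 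This proves the assertion.
\end{proof}
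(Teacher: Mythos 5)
Your proposal is correct and follows essentially the same route as the paper: the paper likewise equips $H^{1/2}(\Gamma_0,\mathbb{R}^d)$ with the diffeology coming from its Banach-space (manifold) structure and then applies the subset and quotient constructions of Theorem and Definition~\ref{def_subsetdiff} and Theorem and Definition~\ref{def_quotdiff} in succession. Your write-up is only slightly more explicit about why the ambient space is a diffeological space, which the paper handles in the paragraph preceding the theorem.
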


\emph{Proof.}
Let $D_{H^{1/2}(\Gamma_0,\mathbb{R}^d)}$ be the diffeology on $H^{1/2}(\Gamma_0,\mathbb{R}^d)$.
Due to Theorem and Definition~\ref{def_subsetdiff}, ${\cal H}^{1/2}(\Gamma_0,\mathbb{R}^d)$ carries the subset diffeology $\iota_{{\cal H}^{1/2}(\Gamma_0,\mathbb{R}^d)}^\ast\left(D_{H^{1/2}(\Gamma_0,\mathbb{R}^d)}\right)$.
Then the space ${\cal B}^{1/2}(\Gamma_0,\mathbb{R}^d)$ carries the quotient diffeology 
\begin{equation}
\label{diffeology_B}
D_{{\cal B}^{1/2}(\Gamma_0,\mathbb{R}^d)}:=\pi_\ast\left(\iota_{{\cal H}^{1/2}(\Gamma_0,\mathbb{R}^d)}^\ast\left(D_{H^{1/2}(\Gamma_0,\mathbb{R}^d)}\right)\right)
\end{equation}
due to Theorem and Definition~\ref{def_quotdiff}. 
\qed

\smallskip

So far, we have defined the space of $H^{1/2}$-shapes and showed that it is a diffeological space.
The appearance of a diffeological space in the context of shape optimization can be seen as a first step or motivation towards the formulation of optimization techniques on diffeological spaces. Note that, so far, there is no theory for shape optimization on diffeological spaces.
Of course, properties of the shape space ${\cal B}^{1/2}\left(\Gamma_0,\mathbb{R}^d\right)$ have to be investigated.
E.g., an important question is \emph{how the tangent space looks like}. 
Tangent spaces and tangent bundles are important in order to state the connection of ${\cal B}^{1/2}\left(\Gamma_0,\mathbb{R}^d\right)$ to shape calculus and in this way to be able to formulate optimization algorithms in ${\cal B}^{1/2}\left(\Gamma_0,\mathbb{R}^d\right)$. 
There are many equivalent ways to define tangent spaces of manifolds, e.g., geometric via velocities of curves, algebraic via derivations or physical via cotangent spaces (cf.~\cite{Kuehnel}). 
Many authors have generalized these concepts to diffeological spaces, e.g., \cite{ChristensenWu,Hector,Iglesias,Souriau}. In \cite{Souriau}, tangent
spaces are defined for diffeological groups by identifying smooth curves using certain states.
Tangent spaces and tangent bundles for many diffeological spaces are given in \cite{Hector}. Here smooth curves and a more intrinsic identification are used. 
However, in \cite{ChristensenWu}, it is pointed out that there are some errors in \cite{Hector}.
In \cite{Iglesias}, the tangent space to a diffeological space at a point is defined as a
subspace of the dual of the space of 1-forms at that point. These are used to define tangent bundles.
In \cite{ChristensenWu}, two approaches to the tangent space of a general diffeological space at a point are studied. The first one is the approach introduced in \cite{Hector} and the second one is an approach which uses smooth derivations on germs of smooth real-valued functions. Basic facts about these tangent spaces are proven, e.g., locality and that the internal tangent space respects finite products.
Note that the tangent space to ${\cal B}^{1/2}\left(\Gamma_0,\mathbb{R}^d\right)$ as diffeological space and related objects which are needed in optimization methods, e.g., retractions and vector transports, cannot be deduced or defined so easily. 
The study of these objects and the formulation of optimization methods on a diffeological space go beyond the scope of this paper and are topics of subsequent work.
Moreover, note that the Riemannian structure $g^S$ on ${\cal B}^{1/2}\left(\Gamma_0,\mathbb{R}^d\right)$ has to be investigated in order to define ${\cal B}^{1/2}\left(\Gamma_0,\mathbb{R}^d\right)$ as a \emph{Riemannian diffeological space}. In general, a diffeological space can be equipped with a Riemannian structure as outlined, e.g., in \cite{Magnot}. 

Besides the tangent spaces, another open question is \emph{which assumptions guarantee that the image of a Lipschitz shape under $w\in H^{1/2}\left(\Gamma_0,\mathbb{R}^d\right)$ is again a Lipschitz shape}. 
Of course, the image of a Lipschitz shape under a continuously differentiable function is again a Lipschitz shape, but the requirement that $w$ is a $\mathcal{C}^1$-function is a too strong. One idea is to require that $w$ has to be a bi-Lipschitz function. Unfortunately, the image of a Lipschitz shape under a bi-Lipschitz function is not necessarily a Lipschitz shape as the example given in \cite[Subsection~4.1]{Lipschitz-shape} shows.
\kathrin{
	Another option is to generalize the concept of Lipschitz domains to non-tangentially accessible (NTA) domains. In order to formulate the definition of these domains, we need the concept of  so-called \emph{Harnack chains}. 
	
	\begin{definition}[$\alpha$-Harnack chain]
		Let $\alpha\geq1$. For a metric space $(\mathbb{R}^d,\text{d})$ and an open set $\Omega\subset \mathbb{R}^d$, a sequence of balls $B_0, \dots,B_k\subset\Omega$ is called an $\alpha$-Harnack chain in $\Omega$ if $B_i\cap B_{i-1}\not= \emptyset $ for all $i=1,\dots,k$ and $$\alpha^{-1} \text{\emph{dist}}(B_i,\partial\Omega)\leq r(B_i)\leq \alpha\, \text{\emph{dist}}(B_i,\partial\Omega),$$ where $\text{\emph{dist}}(B_i,\partial\Omega):=\inf\limits_{x\in B_i, y\in\partial\Omega} \text{\emph{d}}(x,y)$ and $r(B_i)$ is the radius of $B_i$.
	\end{definition}
	
	\begin{definition}[Non-tangentially accessible domain]
		Let $(\mathbb{R}^d,\text{\emph{d}})$ be a metric space. A bounded open set $\Omega$ is called an non-tangentially accessible domain (NTA domain) if the following conditions hold:
		\begin{itemize}
			\item[(i)] There exist $\alpha\geq 1$ such that for all $\eta>0$ and for all $x,y\in\Omega$ such that $\text{\emph{dist}}(x,\partial\Omega)\geq\eta$, $\text{\emph{dist}}(y,\partial\Omega)\geq\eta$ and $\text{\emph{d}}(x,y)\leq C\eta$ for some $C>0$, there exists an $\alpha$-Harnack chain $B_0,\dots,B_k\subset\Omega$ such that $x\in B_0,y\in B_k$ and $k$ depends on $C$ but not on $\eta$.
			\item[(ii)] $\Omega$ satisfies the corkscrew condition, i.e., there exist $r_0>0$ and $\varepsilon>0$ such that for all $r\in (0,r_0)$ and $x\in \partial\Omega$ the sets $B(x,r)\cap\Omega$, $B(x,r)\cap (\mathbb{R}^d\setminus \overline{\Omega})$ contain a ball of radius $\varepsilon r$.
		\end{itemize}
	\end{definition}
	\noindent
	In fact, the image of an non-tangentially accessible (NTA) domain under a global quasiconformal mapping is an NTA domain (cf.~\cite{NTA1}). 
	If we consider boundaries $\Gamma_0$ of NTA domains $\mathcal{X}_0$ instead of Lipschitz domains, the space $\mathcal{H}^{1/2}$ defined in (\ref{force-ball}) changes to
	\begin{equation}
	\label{H}
	\begin{split}
	&{\cal H}^{1/2}(\Gamma_0,\mathbb{R}^d)\\&:=
	\{w\colon  w\in H^{1/2}(\Gamma_0, \mathbb{R}^d) \text{ injective, continuous; }w=\text{tr}\,W \text{ with }\\
	& \hspace{1.2cm} W\in H^1(\mathcal{X}_0, \mathbb{R}^d) \text{ global quasiconformal and } \Gamma_0=\partial\mathcal{X}_0\}.
	\end{split}
	\end{equation}
	The resulting space of   $H^{1/2}$-shapes carries also a diffeological structure if $\Gamma_0$ is the boundary of an NTA domain $\mathcal{X}_0$ due to Theorem~\ref{Theorem:DiffStructure}.
	
	\begin{remark}
		A quasi-conformal mapping of the open $d$-ball $B^{d-1}$ induces a homeomorphism on the boundary for $d=2,3$ (cf.~\cite{mori1957quasi,gehring1962rings}). In \cite{mostow1968quasi}, this result is generalized for higher dimensions. More precisely, it is proven that a quasiconformal mapping of an open ball in $\mathbb{R}^d$ onto itself extends to a homeomorphism of the closed $d$-ball. If we apply these results to our shape space, we get injectivity and continuity in (\ref{H}) for free for $\Gamma_0:=S^{d-1}$. 
	\end{remark}
}

\begin{figure}
	\vspace*{-1.5cm}
	\begin{minipage}{.3\textwidth}
		\vspace{2.3cm}
		\begin{center}
			\includegraphics[height=2cm]{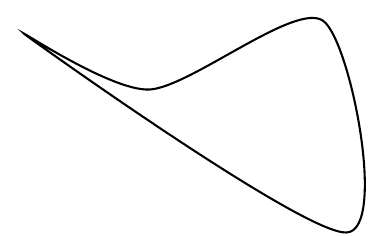}\\[.6cm]
			One-dimensional case
		\end{center}
	\end{minipage}
	\vspace{.4cm}
	\begin{minipage}{.85\textwidth}
		\vspace{.4cm}
		\hspace{-.1cm}\includegraphics[width=.52\textwidth]{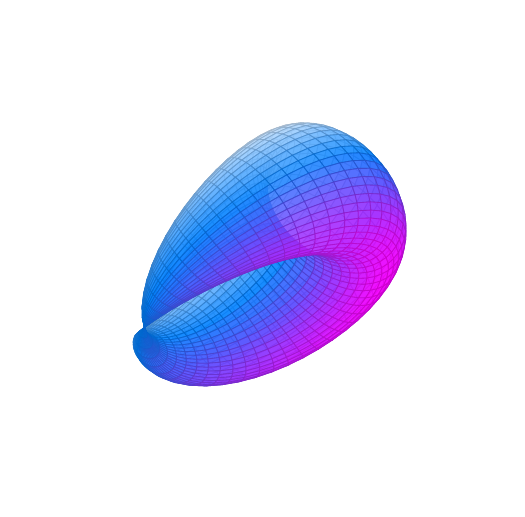}
		\hspace{-2cm}\includegraphics[width=.52\textwidth]{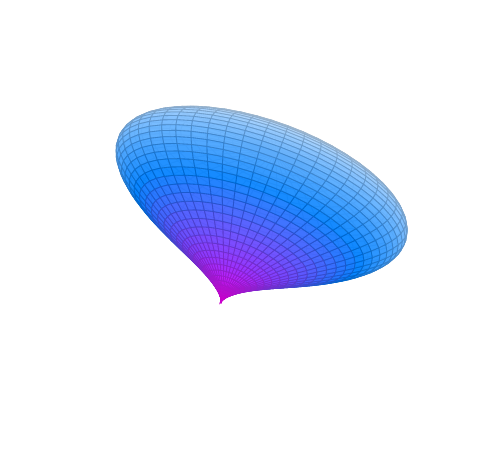}
		\begin{center}
			\vspace*{-1.2cm}Two-dimensional non-Lipschitz shapes.
		\end{center}
	\end{minipage}
	\caption{Examples of non-Lipschitz shapes.}
	\label{figure_nonpriorshapes}
\end{figure}


\section{Conclusion}

The differential-geometric structure of the shape space $B_e$ is applied to the theory of shape optimization problems. In particular, a Riemannian shape gradient and a Riemannian shape Hessian with respect to the Sobolev metric $g^1$ is defined. The specification of the Riemannian shape Hessian requires the Riemannian connection, which is given and proven for the \begin{rrev}
	first\end{rrev} Sobolev metric. 
It is outlined that we have to deal with surface formulations of shape derivatives if we consider the \begin{rrev}
	first\end{rrev} Sobolev metric. In order to use the more attractive volume formulations, we consider the Steklov-Poincar\'{e} metrics $g^S$ and state their connection to shape calculus by defining the shape gradient with respect to $g^S$. 
The gradients with respect to both, $g^1$ and $g^S$, and the Riemannian shape Hessian, open the door to formulate optimization algorithms in $B_e$.
\begin{rrev}We formulate the gradient method in $(B_e,g^1)$ and $(B_e,g^S)$ as well as the Newton method in $(B_e,g^1)$.\end{rrev}
\begin{rrev}
	The implementation and investigation of Newton's method in $(B_e,g^1)$ for an explicit example will be touched in future work. 
	In particular, the comparison of Newton's method in $(B_e,g^1)$ and $(B_e,g^S)$ will be investigated in the future.
	Here, a challenging question, which arises, is how an explicit formulation of the Riemannian shape Hessian with respect to the  Steklov-Poincar\'{e} metric looks like. For this, we would generally need to work in fractional order Sobolev spaces and deal with the projected Poincar\'{e}-Steklov operator.
\end{rrev}

Since the shape space $B_e$ limits the application of optimization techniques, we extend the definition of smooth shapes to $H^{1/2}$-shapes and define a novel shape space. It is shown that this space has a diffeological structure.
In this context, we clarify the differences between manifolds and diffeological spaces.
From a theoretical point of view, a diffeological space is very attractive in shape optimization. 
It can be supposed that a diffeological structure suffices for many differential-geometric tools used in shape optimization techniques. 
In particular, objects which are needed in optimization methods, e.g., retractions and vector transports, have to be deduced. Note that these objects cannot be defined so easily and additional work is required to formulate optimization methods on a diffeological space, which remain open for further research and will be touched in subsequent papers.


\section*{Acknowledgement}
The author is indebted to Ben Anthes for many helpful comments and discussions about diffeological spaces.
Moreover, the author thanks Leonhard Frerick (Trier University) for discussions about Lipschitz domains.


\bibliographystyle{abbrv}
\bibliography{citations.bib}

\end{document}